\documentclass[11pt,a4paper,twoside,openright]{article}
\usepackage[english]{babel}
\usepackage{amsmath,amssymb, amsfonts,epic}
\usepackage[dvips]{graphicx}

\usepackage{setspace}
\onehalfspacing

\topmargin-3mm
\addtolength{\textheight}{115pt}
\textwidth15.2cm
\textheight 22cm
\oddsidemargin0pt
\evensidemargin0pt
\hoffset0.15in
\setlength{\parindent}{6mm}
\unitlength1.0cm

\newcommand{\s}{\, \mathrm{s}}
\newcommand{\m}{\, \mathrm{m}}

\newcommand{\mss}{\, \mathrm{m/s^2}}
\newcommand{\rad}{\,\mathrm{rad}}
\newcommand{\km}{\, \mathrm{km}}
\newcommand{\kms}{\, \mathrm{km/s}}
\newcommand{\dd}{\mathrm{d}}

\newcommand{\e}{\varepsilon}
\newcommand{\N}{\mathbb{N}}
\newcommand{\R}{\mathbb{R}}
\newcommand{\PP}{\mathbb{P}}

\newcommand{\esp}{\mathbb{E}}

\newcommand{\Var}{\mathrm{Var}}
\newcommand{\Cov}{\mathrm{Cov}}

\newcommand{\PPst}{\PP_{\theta^*}}

\newcommand{\opst}{\mathrm{o_{\PPst}}}

\newcommand{\oopst}{\mathrm{O_{\PPst}}}

\newtheorem{thm}{Theorem}

\newtheorem{prop}{Proposition}

\newtheorem{assumption}{Assumption}
\newtheorem{lemma}{Lemma}
\newenvironment{proof}[1][Proof] {\textit{#1}}{}

\makeatletter
\renewcommand{\subsection}{\@startsection{subsection}{2}{0mm}{\baselineskip}{.25\baselineskip}{\it}}
\makeatother

\pagestyle{myheadings}
\markboth{\it  E.  Gassiat  and   B. Landelle}{\it  Semiparametric regression estimation
}

\begin{document}

\begin{flushleft}
{\Large \bf Semiparametric regression estimation
using noisy nonlinear non invertible functions of the observations.} \\
\
\\

{\large ELISABETH GASSIAT } \\
{\it \'Equipe Probabilit\'es, Statistique et
 Mod\'elisation,   Universit\'e Paris-Sud 11 and CNRS}\\
 \ \\

{\large BENOIT LANDELLE}\\
{\it \'Equipe Probabilit\'es, Statistique et
 Mod\'elisation,   Universit\'e Paris-Sud 11, CNRS,
and Thales Optronique}
\end{flushleft}

\begin{flushleft}
{ \bf ABSTRACT.
We investigate a semiparametric regression model  where one gets noisy non linear non invertible functions of
the observations. We focus on the application to bearings-only tracking. We first investigate
the least squares estimator and prove its consistency and asymptotic normality under mild assumptions.
We study the semiparametric likelihood process and prove local asymptotic normality of the model.
This allows to define the efficient Fisher information as a lower bound for the asymptotic variance of
regular estimators, and to prove that the parametric likelihood estimator is regular and asymptotically
efficient. Simulations are presented to illustrate our results.
}
\end{flushleft}

\smallskip
\noindent {\it Key words and phrases: Nonlinear regression, Semiparametric models, Bearings-only Tracking, Inverse
models, Mixed Effects models }
{\small }\\

\section{Introduction}

In bearings-only tracking (BOT), one gets information about the trajectory of a target only via bearing measurements obtained
by a moving observer. This is a  highly ill-posed problem which requires, so that one be able to propose solutions,
the choice of a trajectory model.
The literature on the subject is very large, and many algorithms have been proposed to track the target, see for instance
\cite{barshalom:li:kirubarajan:2001}, \cite{doucet:defreitas:gordon:2001}, \cite{averbuch:barshalom:dayan:mazor:1998}, \cite{arulampalam:gordon:ristic:2004}.
All these algorithms are designed for particular classes of models for the trajectory of the target. In \cite{landelle:2008}, the author proved that the least squares estimator may be very sensitive to some small deterministic perturbations, in which case the algorithms are highly  non robust. However, it has been also claimed in \cite{landelle:2008} that stochastic perturbations do not
essentially alter
the performances of the estimator. The aim of this paper is to develop an estimation theory for a semiparametric model
that applies to BOT.
The model we study is the following:
\begin{equation}
\label{eq:model}
    \left\{
    \begin{array}{l}
      X_k= S_\theta(t_k) + \varepsilon_k ,\\
      Y_k=\Psi(X_k,t_k)+V_k .
    \end{array}
    \right.
\end{equation}
$(t,\theta) \mapsto S_\theta(t)$
is a known map from $[0,1] \times \Theta$ to $\R^{d}$,  $\Theta$
is the parameter set (in general, a subset of a finite dimensional euclidian space),
$(x,t) \mapsto \Psi (x,t)$ is a known function from $\R^{d}\times [0,1]$ to $\R$, which in general is non invertible, $(t_{k})_{k\in\N}$ is the
sequence of observation times in $[0,1]$, $(\varepsilon_k )_{k\in\N}$ is a sequence of random variables
taking values in $\R^{d}$, $(V_k )_{k\in\N}$ is a sequence of centered i.i.d. random variables
taking values in $\R $,
with known marginal distribution $g(x)dx$, variance $\sigma^{2}$ , and independent
of the sequence $(\varepsilon_k )_{k\in\N}$.
The sequence $(X_k )_{1 \leq k \leq n}$ is not observed.
We aim at estimating $\theta$ using only the observations  $(Y_k )_{1 \leq k \leq n}$.\\
In case of BOT,
$(X_k )_{1 \leq k \leq n}$ is the trajectory of the target, given by its euclidian coordinates at times $( t_k )_{1 \leq k \leq n}$
 ($d=2$),
$S_\theta(\cdot)$ is the parametric trajectory the target is assumed to follow up to some parameter
$\theta$, for instance uniform linear motion, or a sequence of uniform linear and circular motions,
$(\varepsilon_k)_{1 \leq k \leq n}$ is a  noise sequence to take into account the fact that
the model is only an idealization of the true trajectory and to allow stochastic departures of the
trajectory model, and $(V_k)_{1 \leq k \leq n}$ is the observation noise. Since the observer is moving, if
$(O(t) )_{t \in [0,1]}$ is its trajectory, 
the function $\Psi (x,t)$ is the angle, with respect
to some fixed direction, of $x - O(t) $, that is, for $x=(x_1 , x_2 )^T$:
\begin{equation}
\label{eq:psi:bot}
    \Psi(x,t )= \arctan[x_{2}-O_2(t)]/[x_{1}-O_1(t)].
\end{equation}
In such a case, for any $z$ and fixed $t$, the set $\{x:\Psi(x,t )=z\}$ is infinite.
Our aim here is to understand how it is possible to estimate the parameter $\theta$ in model (\ref{eq:model}), what are the
limitations in the statistical performances,  to propose estimation procedures,
to build confidence regions for $\theta$ and to discuss their optimality
under the weakest possible assumptions on the sequence $(\varepsilon_k )_{k\in\N}$.
Indeed, we would like to apply the results to BOT under
realistic assumptions, for which it is not a strong assumption to assume that the observation noise
$(V_k )_{k\in\N}$ consists of i.i.d. random variables with known distribution,
but the trajectory noise
$(\varepsilon_k )_{k\in\N}$ may be quite complicated and unknown. To begin with, we will
assume that the variables $(\varepsilon_k )_{k\in\N}$ are i.i.d. with unknown
distribution.\\
As such, the model may be viewed as a regression model with two variables,  in which one of the variables
is random, is not observed and follows itself a regression model. One could think that it looks
like  an inverse problem, or that the model may be understood as a state space model,
or a mixed effects model, but in a nonstandard way, so that we have not been able to find results
in the literature that apply
to this setting.\\
Throughout the paper, observations $(Y_{k})_{1\leq k\leq n}$
are assumed to follow model (\ref{eq:model}) with true (unknown) parameter $\theta^{*}$ and the observation
times are $t_{k}=\frac{k}{n}$, $k=1,\ldots,n$. All norms $\|\cdot\|$ are euclidian norms.

In Section \ref{sec:ls}, we consider least squares estimation and prove consistency and
asymptotic normality in this setting, see Theorems \ref{theo:ls:cons}
and \ref{theo:ls:clt}. This allows to introduce basic considerations and set some
assumptions. We prove that the results apply to BOT for linear observable trajectory
models and when the trajectory noise has an isotropic distribution,
see Theorem \ref{theo:BOT:1}.
Then, in Section \ref{sec:lik}
we study the likelihood process to set local asymptotic normality
and efficiency in the parametric setting where the density of the noise $(\varepsilon_k )_{k\in\N}$ is known,
and define the efficient Fisher information in the semiparametric setting
where the density of the noise $(\varepsilon_k )_{k\in\N}$ is unknown.
This also gives an estimation criterion which may be used even if the trajectory noise is correlated.
In Section \ref{sec:dep}, we propose strategies for semiparametric estimation
and discuss possible extension of the results to possibly dependent trajectory noise
 $(\varepsilon_k )_{k\in\N}$.
Section \ref{sec:simu} is devoted to simulations.
In each section, particular attention is given to the application of the results to BOT.

\section{Least squares estimation}
\label{sec:ls}
In sections \ref{sec:ls} and \ref{sec:lik}  we will use
\begin{assumption}
\label{ass:bruit1}
$(\varepsilon_k )_{k\in\N}$ is a sequence of i.i.d. random variables.
\end{assumption}
To be able to obtain a consistent estimator of $\theta$, we require that, in the absence of noise
(both observation noise and trajectory noise), the observation at all times is sufficient to retrieve the
parameter. We thus introduce
\begin{assumption}
\label{ass:obs}
If $\theta\in\Theta$
is such that
$\Psi(S_{\theta}(t),t)=\Psi(S_{\theta^{*}}(t),t)$ a.e. for all $t\in[0,1]$, then
$\theta=\theta^{*}$.
\end{assumption}
This is the observability assumption.\\
If the observation noise is centered, in the absence of trajectory noise,
the fact that only $\Psi(S_{\theta}(t),t)$ is observed with additive noise
is not an obstacle to the estimation of $\theta$ under Assumption \ref{ass:obs}. But with trajectory noise,
only the distribution of $ \Psi(S_{\theta}(t)+\varepsilon_1,t)$ may be retrieved from noisy data. In case the marginal distribution
of the $\varepsilon_k$'s is known, this may be enough, but in case it is unknown, one has to be aware of some link
between the distribution of $ \Psi(S_{\theta}(t)+\varepsilon_1,t)$ and $\theta$. We thus introduce the following assumption, which
will be proved to hold in some BOT situations.
\begin{assumption}
\label{ass:esp}
For all $t\in [0,1]$, for all $\theta \in \Theta$,
        \begin{equation*}
            \esp \{ \Psi[S_\theta(t)+\e_{1} ,t] \} =  \Psi[S_\theta(t),t].
        \end{equation*}
\end{assumption}
Let us now define the least  squares criterion and the least  squares estimator (LSE) by
\begin{eqnarray*}
    M_n(\theta) &=&\frac{1}{n} \sum_{k=1}^{n} \left( Y_k - \Psi[S_\theta(t_k),t_k] \right)^2 , \\
    \overline{\theta}_n  &=&  \arg \min_{\theta \in \Theta} M_n(\theta),
\end{eqnarray*}
where $\arg \min_{\theta \in \Theta} M_n(\theta)$ is any minimizer of $M_{n}$.
\subsection{Consistency}
\label{subsec:ls:cons}
We assume that $\Theta$ is a compact subset of $\R^{m}$, and we will use
\begin{assumption}
\label{ass:consi}
$t \mapsto \esp \left( \Psi[S_{\theta^{*}}(t)+\e_{1},t] \right)^2 $ defines a finite continuous function
on $[0,1]$,
$\sup_{t \in [0,1]} \esp \{\left( \Psi[S_{\theta^{*}}(t)+\e_{1},t] \right)^2 1_{\left( \Psi[S_{\theta^{*}}(t)+\e_{1},t] \right)^2 >M}\}$ tends to $0$ as $M$ tends to infinity,
 and $(t,\theta) \mapsto  \Psi[S_{\theta}(t) ,t]  $ defines a finite continuous function
on $[0,1]\times \Theta$.
\end{assumption}
\begin{thm}
\label{theo:ls:cons}
Under assumptions \ref{ass:bruit1} , \ref{ass:obs}, \ref{ass:esp} and \ref{ass:consi},
$\overline{\theta}_n$ converges in probability to $\theta^*$ as $n$ tends to infinity.
\end{thm}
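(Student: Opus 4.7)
The plan is the standard argmin/M-estimation strategy: I would show uniform-in-$\theta$ convergence in probability of $M_n$ to a deterministic limit $M$ over the compact parameter set $\Theta$, identify $\theta^*$ as the unique minimizer of $M$, and conclude by the classical argmin theorem.

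To identify the limit, I write $Y_k - \Psi[S_\theta(t_k),t_k] = U_k + \Delta_\theta(t_k)$, where the deterministic part is $\Delta_\theta(t) = \Psi[S_{\theta^*}(t),t] - \Psi[S_\theta(t),t]$ and the noise is $U_k = \Psi[S_{\theta^*}(t_k)+\e_k,t_k] - \Psi[S_{\theta^*}(t_k),t_k] + V_k$. By Assumption \ref{ass:esp} and the centering of $V_k$, the $U_k$'s are independent and centered, with variances $\phi(t_k)+\sigma^2$ where $\phi(t)=\Var\{\Psi[S_{\theta^*}(t)+\e_1,t]\}$. Expanding the square and taking the Riemann limit for the continuous function $\phi$ (Assumption \ref{ass:consi}), the pointwise limit is
$$M(\theta) = \sigma^2 + \int_0^1 \phi(t)\,dt + \int_0^1 \Delta_\theta(t)^2\,dt.$$
Since the first two terms do not depend on $\theta$, minimizing $M$ reduces to minimizing $\int_0^1 \Delta_\theta(t)^2\,dt$, which is non-negative and vanishes iff $\Psi[S_\theta(t),t]=\Psi[S_{\theta^*}(t),t]$ a.e.\ on $[0,1]$; Assumption \ref{ass:obs} then forces $\theta=\theta^*$, giving the required unique-minimum property.

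The technical core is the uniform-in-$\theta$ convergence of $M_n$ to $M$. The purely deterministic piece $\frac{1}{n}\sum_k \Delta_\theta(t_k)^2$ converges uniformly in $\theta$ to $\int \Delta_\theta^2$ by Riemann-sum approximation for the continuous function $(t,\theta)\mapsto\Delta_\theta(t)^2$ on the compact set $[0,1]\times\Theta$ (Assumption \ref{ass:consi}). For $\frac{1}{n}\sum_k U_k^2$, only uniform integrability of $\Psi[S_{\theta^*}(\cdot)+\e_1,\cdot]^2$ is available, not a uniform fourth-moment bound, so I would use a truncation argument: split $U_k^2=(U_k^2\wedge M)+U_k^2\,1_{\{U_k^2>M\}}$, apply an $L^2$ LLN to the bounded part whose Cesaro mean converges to $\int(\phi+\sigma^2)$, and control the tail uniformly in $n$ by Assumption \ref{ass:consi} before letting $M\to\infty$. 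The same truncation combined with the equicontinuity of $\theta\mapsto\Delta_\theta(\cdot)$ on the compact $\Theta$ handles the cross term $\frac{1}{n}\sum 2U_k\Delta_\theta(t_k)$ uniformly in $\theta$.

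The main obstacle I anticipate is precisely this uniform control of the random terms under the weak integrability in Assumption \ref{ass:consi}; the truncation plus compactness of $[0,1]\times\Theta$ is the natural route. Once uniform convergence is in hand, the classical M-estimator argmin theorem for a continuous criterion with unique minimum on a compact set yields $\overline{\theta}_n \to \theta^*$ in probability.
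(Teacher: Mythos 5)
Your proposal is correct and follows essentially the same route as the paper: the limit criterion $M(\theta)$ and its unique minimizer are identified via Assumption \ref{ass:esp} and \ref{ass:obs}, the stochastic terms are handled by the same truncation law of large numbers (the paper's Lemma \ref{lem:1}), uniformity over the compact $\Theta$ comes from the modulus of continuity of $(t,\theta)\mapsto\Psi[S_\theta(t),t]$, and the conclusion is the standard argmin theorem. The only (cosmetic) difference is that you isolate the $\theta$-independent quadratic noise term so that uniform control is needed only for the cross term, whereas the paper bounds $M_n(\theta_1)-M_n(\theta_2)$ directly; both rest on the same ingredients.
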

The proof is a consequence of general results in $M$-estimation.
We begin with a simple Lemma:
\begin{lemma}
\label{lem:1}
Under Assumption \ref{ass:bruit1}, if $F(\cdot,\cdot)$ is a real function
on $\R^{d}\times [0,1]$ such that
$\sup_{t\in [0,1]}\esp |F(\e_{1},t)|$ is finite,
$\lim_{M\rightarrow +\infty}\sup_{t\in [0,1]}\esp \{|F(\e_{1},t)|1_{ |F(\e_{1},t)|>M}\}=0$,
and $\esp F(\e_{1},\cdot)$ is Riemann-integrable,
then
$$\frac{1}{n}\sum_{k=1}^{n} F\left( \e_{k},t_{k}\right)
$$
converges in probability to $\int_{0}^{1}\esp F\left( \e_{1},t\right)dt$ as $n$ tends to infinity.
\end{lemma}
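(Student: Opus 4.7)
The plan is to decompose the empirical average into a deterministic Riemann-sum part plus a centred fluctuation, and control the fluctuation by truncation since we only have a first-moment assumption on $F$. Concretely, for $M>0$ set $F_M(x,t)=F(x,t)\1{|F(x,t)|\le M}$ and $R_M(x,t)=F(x,t)-F_M(x,t)$, and write
\begin{equation*}
\frac{1}{n}\sum_{k=1}^{n}F(\e_k,t_k)-\int_0^1\esp F(\e_1,t)\,dt \;=\; A_n(M)+B_n(M)+C_n,
\end{equation*}
where $A_n(M)=\frac{1}{n}\sum_{k=1}^{n}\{F_M(\e_k,t_k)-\esp F_M(\e_1,t_k)\}$ is the centred truncated sum, $B_n(M)=\frac{1}{n}\sum_{k=1}^{n}\{R_M(\e_k,t_k)-\esp R_M(\e_1,t_k)\}$ is the centred tail sum, and $C_n=\frac{1}{n}\sum_{k=1}^{n}\esp F(\e_1,t_k)-\int_0^1\esp F(\e_1,t)\,dt$.

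For $C_n$, since the $\e_k$'s are identically distributed, $\esp F(\e_k,t_k)=\esp F(\e_1,t_k)$, and by the Riemann-integrability hypothesis on $t\mapsto\esp F(\e_1,t)$ the sum is exactly a Riemann sum of this function, so $C_n\to 0$ deterministically.

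For $A_n(M)$, Assumption~\ref{ass:bruit1} makes the summands independent and centred, and each is bounded by $2M$. Therefore
\begin{equation*}
\Var(A_n(M))\;\le\;\frac{(2M)^2}{n},
\end{equation*}
and Chebyshev's inequality gives $\PP(|A_n(M)|>\eta)\le 4M^2/(n\eta^2)$ for every $\eta>0$, so $A_n(M)\to 0$ in probability for each fixed $M$.

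The main, and essentially only, non-routine step is handling $B_n(M)$ uniformly in $n$ with only a first-moment hypothesis. Here the uniform integrability condition $\lim_{M\to\infty}\sup_{t\in[0,1]}\esp\{|F(\e_1,t)|\1{|F(\e_1,t)|>M}\}=0$ is exactly what is needed: it yields
\begin{equation*}
\esp|B_n(M)|\;\le\;\frac{2}{n}\sum_{k=1}^{n}\esp|R_M(\e_1,t_k)|\;\le\;2\sup_{t\in[0,1]}\esp\{|F(\e_1,t)|\1{|F(\e_1,t)|>M}\}\;=:\;\rho(M),
\end{equation*}
with $\rho(M)\to 0$ as $M\to\infty$, uniformly in $n$. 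The proof is then closed by the standard two-parameter $\e$-argument: given $\e>0$, first choose $M$ large enough that $\rho(M)<\e$, making $\PP(|B_n(M)|>\e)\le\rho(M)/\e$ small uniformly in $n$ via Markov, then let $n\to\infty$ so that $A_n(M)$ and $C_n$ vanish. Combining the three pieces gives convergence in probability to $\int_0^1\esp F(\e_1,t)\,dt$.
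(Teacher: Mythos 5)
Your proof is correct and follows essentially the same route as the paper: the same truncation at level $M$, a variance/Chebyshev bound on the centred truncated part, the uniform integrability hypothesis to control the tail part uniformly in $n$, and Riemann integrability for the deterministic part. No gaps.
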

\begin{proof}
\\
First of all, by the integrability assumption,
$$\frac{1}{n}\sum_{k=1}^{n} \esp F\left( \e_{k},t_{k}\right)
$$
converges to $\int_{0}^{1}\esp F\left( \e_{1},t\right)dt$ as $n$ tends to infinity. Then
\begin{eqnarray*}
\frac{1}{n}\sum_{k=1}^{n} \left[F\left( \e_{k},t_{k}\right)-\esp F(\e_{k},t_{k})\right]&=&
\frac{1}{n}\sum_{k=1}^{n}\left[ F\left( \e_{k},t_{k}\right)1_{ |F(\e_{k},t_{k})|>M}-\esp \{F(\e_{k},t_{k})1_{ |F(\e_{k},t_{k})|>M}\}\right]\\
&&+
\frac{1}{n}\sum_{k=1}^{n}\left[ F\left( \e_{k},t_{k}\right)1_{ |F(\e_{k},t_{k})|\leq M}-\esp \{F(\e_{k},t_{k})1_{ |F(\e_{k},t_{k})|\leq M}\}\right].
\end{eqnarray*}
The variance of the second term is upper bounded by $2\frac{M^{2}}{n}$ so that the second term
tends to $0$ in probability as $n$ tends to infinity, and
the absolute value of the first term has expectation upper bounded by
$2\sup_{t\in [0,1]}\esp \{|F(\e_{1},t)|1_{ |F(\e_{1},t)|>M}\}$, which may be made smaller than any positive
$\epsilon$ for big enough $M$,
which proves the lemma.
\end{proof}
\\ Define now
\begin{equation*}
    M(\theta) =  \int_{0}^{1} \esp \left( \Psi[S_{\theta^{*}}(t)+\e_{1},t]- \Psi[S_{\theta}(t),t] \right)^2 \,dt + \sigma^2.
\end{equation*}
Direct calculations yield
\begin{multline*}
    M(\theta)-M(\theta^*) \\
    =\int_{0}^{1} \esp \left(
    \left\{ \Psi[S_{\theta^*}(t)+\e_1 ,t]- \Psi[S_{\theta}(t),t] \right\}^2 -
    \left\{ \Psi[S_{\theta^*}(t)+\e_1,t]- \Psi[S_{\theta^*}(t),t] \right\}^2 \right)\,dt \\
     =\int_{0}^{1} \left\{  \Psi[S_{\theta^*}(t),t] -  \Psi[S_{\theta}(t),t] \right\} \times
    \left\{ 2 \esp \left( \Psi[S_{\theta^*}(t)+\e_1,t]\right)- \Psi[S_{\theta^*}(t),t]- \Psi[S_{\theta}(t),t] \right\} \, dt.
\end{multline*}
By Assumption \ref{ass:esp}, it follows that
\begin{equation*}
    M(\theta)-M(\theta^*) =  \int_{0}^{1}  \left\{ \Psi[S_\theta(t),t]- \Psi[S_{\theta^*}(t),t] \right\}^2 \, dt
\end{equation*}
so that $M(\theta)$ has a unique minimum at $\theta^{*}$ by Assumption \ref{ass:obs}.
Also, under Assumption \ref{ass:consi},
$\theta \mapsto M(\theta)$ is uniformly continuous from $\Theta$ to $\R$.\\
Now, for any $\theta$,
\begin{eqnarray*}
M_{n}(\theta)&=&\frac{1}{n}\sum_{k=1}^{n} V_{k}^{2}+
\frac{2}{n}\sum_{k=1}^{n} V_{k}\left(\Psi[S_{\theta^*}(t_k)+\e_{k},t_k]- \Psi[S_\theta(t_k),t_k]\right)\\
&&+\frac{1}{n}\sum_{k=1}^{n} \left( \Psi[S_{\theta^*}(t_k)+\e_{k},t_k] - \Psi[S_\theta(t_k),t_k] \right)^2.
\end{eqnarray*}
$\frac{1}{n}\sum_{k=1}^{n} V_{k}^{2}$ converges in probability to $\sigma^{2}$;
the variance of
$\frac{1}{n}\sum_{k=1}^{n} V_{k}\left(\Psi[S_{\theta^*}(t_k)+\e_{k},t_k]- \Psi[S_\theta(t_k),t_k]\right)$ is $\frac{\sigma^{2}}{n^{2}}\sum_{k=1}^{n}\esp  \left(\Psi[S_{\theta^*}(t_k)+\e_{k},t_k]- \Psi[S_\theta(t_k),t_k]\right)^2$,
which converges to $0$,
so that \\
$\frac{2}{n}\sum_{k=1}^{n} V_{k}\left(\Psi[S_{\theta^*}(t_k)+\e_{k},t_k]- \Psi[S_\theta(t_k),t_k]\right)$
converges in probability to $0$;
and applying Lemma \ref{lem:1},
$\frac{1}{n}\sum_{k=1}^{n} \left( \Psi[S_{\theta^*}(t_k)+\e_{k},t_k] - \Psi[S_\theta(t_k),t_k] \right)^2$
converges in probability to \\
$\int_{0}^{1} \esp \left( \Psi[S_{\theta^{*}}(t)+\e_{1},t]- \Psi[X_{\theta}(t),t] \right)^2 \,dt$. Thus for any $\theta\in\Theta$, $M_{n}(\theta)$ converges in probability to $M(\theta)$.\\
Using the compacity of $\Theta$ and the second part of Assumption \ref{ass:consi}, it is possible to strengthen this pointwise convergence to a uniform one:
\begin{equation}
\label{eq:mn}
    \sup_{\theta \in \Theta} |M_n(\theta)-M(\theta)| = \opst(1).
\end{equation}
Indeed, for any $\theta_{1}$ and $\theta_{2}$ in $\Theta$,
\begin{multline*}
M_n(\theta_{1})-M_{n}(\theta_{2})\\
=\frac{1}{n}\sum_{k=1}^{n}\left(2\Psi[S_{\theta^*}(t_k)+\e_{k},t_k]-
\Psi[S_{\theta_{1}}(t_k),t_k]-\Psi[S_{\theta_{2}}(t_k),t_k]\right)
\left(\Psi[S_{\theta_{2}}(t_k),t_k]-\Psi[S_{\theta_{1}}(t_k),t_k]\right)\\
+\frac{2}{n}\sum_{k=1}^{n}V_{k}\left(\Psi[S_{\theta_{2}}(t_k),t_k]- \Psi[S_{\theta_{1}}(t_k),t_k]\right)
\end{multline*}
so that for any $\delta>0$,
$$
\sup_{\|\theta_{1}-\theta_{2}\|\leq \delta}\vert M_n(\theta_{1})-M_{n}(\theta_{2})\vert
\leq  \omega (\delta ) \left[\frac{1}{n}\sum_{k=1}^{n}\left(2\vert \Psi[S_{\theta^*}(t_k)+\e_{k},t_k]\vert
+ 2 \sup_{\theta, t}\vert \Psi[S_{\theta }(t ),t ]\vert +2|V_{k}|\right)\right]
$$
where $\omega (\cdot)$ is the uniform modulus of continuity of
$(t,\theta) \mapsto \Psi[S_{\theta}(t) ,t]  $. The right-hand side of the inequality
converges in probability by Lemma \ref{lem:1} to a constant times $\omega (\delta )$, so that
equation (\ref{eq:mn}) follows from compacity of $\Theta$.
Theorem \ref{theo:ls:cons} now follows from \cite{vandervaart:1998} Theorem 5.7.
\subsection{Asymptotic normality}
\label{subsec:ls:as}
Asymptotic normality of the least squares estimator will follow using usual arguments under further
regularity assumptions.
\begin{assumption}
\label{ass:lsclt}
There exists a neighborhood $U$ of $\theta^*$ such that for all $t\in[0,1]$,
$\theta \mapsto  \Psi[S_{\theta}(t) ,t]$ possesses two derivatives on $U$ that are continuous as functions
of $(\theta,t)$ over $U\times [0,1]$.
\end{assumption}
If $\theta \mapsto F$ is a twice differentiable function, let $\nabla_{\theta}F (\theta')$ denote the gradient of $F$
at $\theta'$,
and $D^{2}_{\theta}F (\theta')$ the hessian of $F$ at $\theta'$.
Define for $\theta\in U$:
\begin{eqnarray*}
I_R(\theta)&=& \int_{0}^{1} \nabla_\theta \Psi[S_\theta(t),t]  \nabla_\theta \Psi[S_\theta(t),t]^{T} \, dt \\
I_\Psi(\theta) &=& \int \esp \left\{ \Psi[S_{\theta^* }(t)+\e_{1},t] -\Psi[S_{\theta }(t),t]
    \right\}^2  \nabla_\theta \Psi[S_\theta(t),t]  \nabla_\theta \Psi[S_\theta(t),t]^{T}  dt.
\end{eqnarray*}
Then:
\begin{thm}
\label{theo:ls:clt}
Under Assumptions \ref{ass:bruit1} , \ref{ass:obs}, \ref{ass:esp}, \ref{ass:consi} and \ref{ass:lsclt},
if $I_R(\theta^*)$ is non singular,
\begin{multline*}
    \sqrt{n} (\overline{\theta}_n - \theta^*)  =\\
     I_R(\theta^*)^{-1} \frac{1}{\sqrt{n}} \sum_{k=1}^{n}
    \left\{ \Psi[S_{\theta^*}(t_k)+\e_k,t_k] -\Psi[S_{\theta^*}(t_k),t_k] +V_k
    \right\} \nabla_\theta \Psi[S_{\theta^*}(t_k),t_k] + \opst(1).
\end{multline*}
In particular,
$\sqrt{n}(\overline{\theta}_n - \theta^*)$ converges in distribution to ${\cal N}\left(0,I_M^{-1}(\theta^*) \right)$
where
$$
I_M^{-1}(\theta^*)=  I_R^{-1}(\theta^*)\left[I_\Psi(\theta^*)+\sigma^2 I_R(\theta^*) \right] I_R^{-1}(\theta^*).$$
\end{thm}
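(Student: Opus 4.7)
The plan is the standard $M$-estimator route: consistency plus a Taylor expansion of the score, combined with a Lindeberg--Feller central limit theorem for the resulting triangular array. By Theorem \ref{theo:ls:cons}, $\overline{\theta}_n \to \theta^*$ in probability, so with probability tending to one it lies in the neighborhood $U$ of Assumption \ref{ass:lsclt}. On this event the first order condition $\nabla_\theta M_n(\overline{\theta}_n)=0$ holds, and a Taylor expansion at $\theta^*$ gives
$$\sqrt{n}(\overline{\theta}_n-\theta^*)=-[D^2_\theta M_n(\tilde{\theta}_n)]^{-1}\sqrt{n}\,\nabla_\theta M_n(\theta^*)$$
for some random $\tilde{\theta}_n$ between $\overline{\theta}_n$ and $\theta^*$.

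Set $W_k:=\Psi[S_{\theta^*}(t_k)+\e_k,t_k]-\Psi[S_{\theta^*}(t_k),t_k]+V_k$. Direct differentiation of $M_n$ yields
$$\sqrt{n}\,\nabla_\theta M_n(\theta^*)=-\frac{2}{\sqrt{n}}\sum_{k=1}^n W_k\,\nabla_\theta\Psi[S_{\theta^*}(t_k),t_k],$$
a sum of independent mean-zero variables: the mean vanishes by Assumption \ref{ass:esp} together with the centering of $V_k$. The Hessian at any $\theta\in U$ decomposes into a quadratic term $\frac{2}{n}\sum_k \nabla_\theta\Psi[S_\theta(t_k),t_k]\,\nabla_\theta\Psi[S_\theta(t_k),t_k]^T$, which by Lemma \ref{lem:1} converges to $2\,I_R(\theta)$, and a term linear in the residuals $Y_k-\Psi[S_\theta(t_k),t_k]$ whose expectation at $\theta^*$ vanishes by the same centering, and which is therefore $\opst(1)$ by a further application of Lemma \ref{lem:1}.

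The main technical step---the one I expect to require most care---is upgrading this pointwise convergence of the Hessian to a \emph{uniform} one on a neighborhood of $\theta^*$, so that evaluating at the random point $\tilde{\theta}_n$ is legitimate. I would mimic the modulus-of-continuity argument that gave (\ref{eq:mn}): the continuity of the first and second derivatives of $\theta\mapsto\Psi[S_\theta(t),t]$ on $U\times[0,1]$ from Assumption \ref{ass:lsclt}, combined with Lemma \ref{lem:1} applied to suitable envelopes, delivers uniform convergence of $D^2_\theta M_n$ in a neighborhood of $\theta^*$, hence $D^2_\theta M_n(\tilde{\theta}_n)\to 2\,I_R(\theta^*)$ in probability.

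Substituting back and using the invertibility of $I_R(\theta^*)$ yields the announced stochastic expansion. It remains to apply the Lindeberg--Feller CLT to the triangular array $W_k\,\nabla_\theta\Psi[S_{\theta^*}(t_k),t_k]$. The $k$-th summand has covariance
$$\bigl(\esp\{\Psi[S_{\theta^*}(t_k)+\e_1,t_k]-\Psi[S_{\theta^*}(t_k),t_k]\}^2+\sigma^2\bigr)\,\nabla_\theta\Psi[S_{\theta^*}(t_k),t_k]\,\nabla_\theta\Psi[S_{\theta^*}(t_k),t_k]^T,$$
the $V_k$--$\e_k$ cross-term vanishing by independence and centering, and Lemma \ref{lem:1} identifies the Cesaro limit as $I_\Psi(\theta^*)+\sigma^2\,I_R(\theta^*)$. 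The Lindeberg condition reduces to uniform integrability of the $W_k^2$, which follows from Assumption \ref{ass:consi} together with $\esp V_1^2=\sigma^2<\infty$. Sandwiching by $I_R^{-1}(\theta^*)$ then produces the stated asymptotic variance $I_M^{-1}(\theta^*)$.
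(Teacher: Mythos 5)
Your proposal is correct and follows essentially the same route as the paper: first-order condition at $\overline{\theta}_n$, explicit gradient and Hessian of $M_n$, centering of the score via Assumption \ref{ass:esp}, a modulus-of-continuity argument (via Lemma \ref{lem:1}) to get uniform convergence of the Hessian to $2I_R$ near $\theta^*$, and Lindeberg--Feller plus Slutsky for the limit law. The only cosmetic difference is that the paper writes the Taylor remainder in integral form $\int_0^1 D^2_\theta M_n[\theta^*+s(\overline{\theta}_n-\theta^*)]\,ds$ rather than with a single mean-value point $\tilde{\theta}_n$ (the correct form for a vector-valued gradient), but your uniform-convergence step covers either version.
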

Let us notice that, for a null sequence $(\varepsilon_k)_{k\in \N}$, we retrieve the usual Fisher information matrix for the parametric regression model.

The proof follows Wald's arguments. On the set $(\overline{\theta}_n \in U )$, which has probability
tending to $1$ according to Theorem \ref{theo:ls:cons}:
\begin{equation*}
    \nabla_\theta M_n(\overline{\theta}_n) = 0 = \nabla_\theta M_n(\theta^*) + \int_{0}^{1} D^{2}_{\theta} M_n[\theta^*+s(\overline{\theta}_n-\theta^*)] \, ds \:(\overline{\theta}_n-\theta^*).
\end{equation*}
Direct calculations yield for any $\theta\in U$
\begin{equation*}
    \label{eq:Grad_M_n}
    \nabla_\theta M_n(\theta) = -\frac{2}{n}  \sum_{k=1}^{n}\left\{ \Psi[S_{\theta^*}(t_k)+\e _k,t_k]+V_k - \Psi[S_{\theta} (t_k),t_k] \right\}
    \nabla_\theta \Psi[S_{\theta}(t_k),t_k],
\end{equation*}
and
\begin{multline}
\label{eq:Hessian_M_n}
    D^{2}_{\theta}  M_n(\theta) = \frac{2}{n} \sum_{k=1}^{n} \nabla_\theta \Psi[S_\theta(t_k),t_k] \nabla_\theta \Psi[S_\theta(t_k),t_k]^{T} \\
    - \frac{2}{n} \sum_{k=1}^{n}
    \left\{ \Psi[S_{\theta^*}(t_k)+\e_k,t_k]+V_k - \Psi[S_\theta(t_k),t_k] \right\}
    D^{2}_{\theta} \Psi[S_\theta(t_k),t_k].
\end{multline}
Notice that, using Assumption \ref{ass:esp}, $\nabla_\theta M_n(\theta^*)$ is a centered random variable, and that, using
Assumptions \ref{ass:consi}, \ref{ass:lsclt},  the variance of $\nabla_\theta M_n(\theta^*)$
converges to $4\left[I_\Psi(\theta^*)+\sigma^2 I_R(\theta^*) \right]$ as $n\rightarrow +\infty$.
Also using
Assumptions \ref{ass:esp}, \ref{ass:consi}, \ref{ass:lsclt}, and applying Lemma \ref{lem:1},
$D^{2}_{\theta}  M_n(\theta)$ converges in probability to $2I_{R}(\theta)$
as $n\rightarrow +\infty$.\\
Using Assumption \ref{ass:lsclt}, there exists an increasing function $\omega$ satisfying $\lim_{\delta \to 0} \omega(\delta)=0$ such that, for all $(\theta,\theta') \in U^2$ with $\|\theta-\theta'\| \leq \delta$,
\begin{equation*}
  \left\|  D^{2}_{\theta} M_n(\theta) -D^{2}_{\theta} M_n(\theta') \right\|
  \leq \omega(\delta) \times \frac{1}{n} \sum_{k=1}^{n}\left(\left|\Psi[S_{\theta^*}(t_k)+\e_k,t_k]+ V_k\right|+2\right).
\end{equation*}
It follows that on the set $(\overline{\theta}_n \in U )$
$$
    \|{D^{2}_{\theta} M_n[\theta^*+s(\overline{\theta}_n-\theta^*)]-D^{2}_{\theta} M_n(\theta^*)\|
   \leq \omega(\|\overline{\theta}_n-\theta^*}\|) \times \frac{1}{n} \sum_{k=1}^{n}\left(\left|\Psi[S_{\theta^*}(t_k)+\e _k,t_k]+ V_k\right|+2\right).
$$
By Lemma \ref{lem:1}, $\frac{1}{n} \sum_{k=1}^{n}\left|\Psi[S_{\theta^*}(t_k)+\e_k,t_k]+ V_k\right| = \oopst(1)$
so that, using the consistency of $\overline{\theta}_{n}$, Lemma \ref{lem:1} and Assumption \ref{ass:lsclt}:
\begin{equation*}
    \int_{0}^{1} D^{2}_{\theta} M_n[\theta^*+s(\overline{\theta}_n-\theta^*)] \, ds = 2 I_R(\theta^*) + \opst(1).
\end{equation*}
Finally, we obtain
\begin{multline*}
    \left(  I_R(\theta^*)+\opst(1) \right) \sqrt{n} (\overline{\theta}_n-\theta^*) =\\
    \frac{1}{\sqrt{n}} \sum_{k=1}^{n} \{ \Psi[S_{\theta^*}(t_k) +\e _k,t_k]+V_k-\Psi[S_{\theta^*}(t_k),t_k] \} \nabla_\theta \Psi[S_{\theta^*}(t_k),t_k] +\opst(1).
\end{multline*}
Using Assumption \ref{ass:lsclt},
the convergence in distribution to ${\cal N}\left(0,I_M^{-1}(\theta^*)\right)$ is a consequence of the Lindeberg-Feller Theorem
and Slutzky's Lemma.\\

Notice that, if $\hat{I}_M$ is a consistent estimator of $I_M(\theta^*)$,  by Slutsky's Lemma,\\
$\sqrt{n}{\hat{I}_M}^{1/2}(\overline{\theta}_n-\theta^* )$ converges in distribution to the centered standard gaussian distribution in $\R^{m}$,
which allows to build confidence regions with asymptotic known level. If the distribution of the trajectory noise $(\e_k )$
is known, one may use $\hat{I}_M=I_M(\overline{\theta}_{n})$. If the distribution of the noise is unknown, one could use bootstrap procedures to build confidence regions based on the empirical distribution of $\overline{\theta}_{n}$ using bootstrap replicates.


Another possibility occurs if one has a majoration
\begin{equation}
\label{eq:maj}
\esp \left\{ \Psi[S_{\theta^* }(t)+\e_{1},t] -\Psi[S_{\theta^* }(t),t]\right\}^2 \leq A^2,
\end{equation}
where $A$ denotes a known constant. Indeed, in such a case, $I_{\Psi}(\theta^*)$ is upper bounded
(in the natural ordering of positive symetric matrices) by
$A^{2}I_{R}(\theta^*)$, so that $I_{M}^{-1}(\theta^*)$ is upper bounded by
$(A^{2}+\sigma^2)I_{R}^{-1}(\theta^*)$, and one may use $(A^2+\sigma^2)I_R^{-1}(\overline{\theta}_n)$ as variance matrix to obtain conservative confidence regions.


\subsection{Application to BOT}
\label{subsec:ls:bot}
To apply the results to BOT, one has to see whether Assumptions  \ref{ass:bruit1}, \ref{ass:obs}, \ref{ass:esp}, \ref{ass:consi} and \ref{ass:lsclt} hold and
if $I_R(\theta^*)$ is non singular.\\
Assumption \ref{ass:obs} is the usual observability assumption which holds for
models such as uniform linear motion if the observer does not
move itself along uniform linear motion
, or a sequence of uniform linear and circular motions, if the observer does not move along uniform linear motion
or circular motion in the same time intervals as the target. Various observability properties are proved in
\cite{landelle}.\\
Assumptions \ref{ass:consi} and \ref{ass:lsclt} hold as soon as the trajectory model $S_{\theta}(t)$ is twice differentiable for all $t$ as a function of $\theta$ and
the denominator in (\ref{eq:psi:bot}) may not be $0$, that is
the bearing exact measurements of the non noisy
possible trajectory stay inside an interval with length $\pi$. This may be seen as an assumption on the
manoeuvres of the observer. This is a usual assumption in BOT literature.
The fact that $I_R(\theta^*)$ is non singular
is equivalent to the observability assumptions for linear models. Let us introduce such models.\\
Let $e_{1}(t),\ldots,e_{p}(t)$ be continuous functions on $[0,1]$, $\theta = (a_{1},\ldots,a_{p},b_{1},\ldots,b_{p})^T$,
\begin{equation}
\label{eq:linmod}
S_{\theta}(t)=\left(\begin{array}{l}
a_{1}e_{1}(t) + \ldots + a_{p}e_{p}(t)\\
b_{1}e_{1}(t) + \ldots + b_{p}e_{p}(t)
\end{array}
\right).
\end{equation}
Then
\begin{prop}
Under model (\ref{eq:linmod}), Assumption \ref{ass:obs} holds if and only if $I_R(\theta^*)$
is non singular.
\end{prop}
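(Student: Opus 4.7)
The strategy is to express both conditions as the non-existence of a direction $h \in \R^{2p} \setminus \{0\}$ such that a certain collinearity of $\R^2$-valued trajectories holds at every $t \in [0,1]$, and then observe that the two characterizations coincide. By linearity of $\theta \mapsto S_\theta$, given $h = (\alpha_1,\ldots,\alpha_p,\beta_1,\ldots,\beta_p)^T$ it is natural to introduce
\begin{equation*}
S_h(t) = \left( \sum_{i=1}^p \alpha_i e_i(t), \sum_{i=1}^p \beta_i e_i(t) \right)^T,
\end{equation*}
so that $S_{\theta^*+h}(t) - O(t) = (S_{\theta^*}(t) - O(t)) + S_h(t)$ for every $t$.

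I would first differentiate (\ref{eq:psi:bot}) along the linear map $\theta \mapsto S_\theta(t)$. A direct calculation gives
\begin{equation*}
h^T \nabla_\theta \Psi[S_{\theta^*}(t),t] = \frac{(S_{\theta^*,1}(t)-O_1(t))\,S_{h,2}(t) - (S_{\theta^*,2}(t)-O_2(t))\,S_{h,1}(t)}{\|S_{\theta^*}(t)-O(t)\|^2},
\end{equation*}
i.e.\ the $2 \times 2$ determinant of the matrix with columns $S_{\theta^*}(t)-O(t)$ and $S_h(t)$, divided by $\|S_{\theta^*}(t)-O(t)\|^2$. By the BOT assumption recalled in Subsection \ref{subsec:ls:bot} the denominator never vanishes, so the above quantity vanishes iff the two vectors $S_{\theta^*}(t)-O(t)$ and $S_h(t)$ are collinear in $\R^2$.

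From the identity $h^T I_R(\theta^*) h = \int_0^1 \left(h^T \nabla_\theta \Psi[S_{\theta^*}(t),t]\right)^2 dt$ with a continuous non-negative integrand, I deduce that $I_R(\theta^*)$ is singular iff there exists $h \neq 0$ such that $S_h(t)$ is collinear to $S_{\theta^*}(t)-O(t)$ for every $t \in [0,1]$. On the observability side, since $\Psi(x,t)$ only records the direction of $x-O(t)$ modulo a sign, the equality $\Psi[S_{\theta^*+h}(t),t] = \Psi[S_{\theta^*}(t),t]$ at every $t$ is precisely the collinearity of $S_{\theta^*+h}(t)-O(t)$ with $S_{\theta^*}(t)-O(t)$, and the linear decomposition above turns this into collinearity of $S_h(t)$ with $S_{\theta^*}(t)-O(t)$. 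Observability failure therefore provides directly a witness $h = \theta - \theta^* \neq 0$ of singularity; conversely, any witness $h$ of singularity can, by scale-invariance of the collinearity condition in $h$, be rescaled to $\lambda h$ with $\theta^* + \lambda h \in \Theta$, producing a witness of observability failure.

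The only delicate points are bookkeeping rather than depth. I would need to upgrade the a.e.\ statement in Assumption \ref{ass:obs} to an everywhere statement via continuity of $(t,\theta)\mapsto\Psi[S_\theta(t),t]$, justify that $\theta^*$ lies in the interior of $\Theta$ so that the rescaling step of the converse yields an admissible parameter, and appeal once more to the BOT standing assumption $\|S_{\theta^*}(t)-O(t)\|>0$ to ensure the unambiguous equivalence between the vanishing of $h^T\nabla_\theta\Psi[S_{\theta^*}(t),t]$ and collinearity of the two $\R^2$-vectors.
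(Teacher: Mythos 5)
Your argument is correct and is essentially the paper's: the vanishing of $h^T\nabla_\theta\Psi[S_{\theta^*}(t),t]$ for all $t$ and the identity $\Psi[S_{\theta^*+h}(t),t]=\Psi[S_{\theta^*}(t),t]$ for all $t$ both reduce, after clearing the nonvanishing denominator $S_{\theta^*}(t)_1-O_1(t)$, to the single linear relation $\sum_k\beta_k e_k(t)-\sum_k\alpha_k e_k(t)m(\theta^*,t)\equiv 0$, which is exactly how the paper characterizes both conditions via linear independence of $e_1,\ldots,e_p,e_1m(\theta^*,\cdot),\ldots,e_pm(\theta^*,\cdot)$. Your determinant/collinearity phrasing and the rescaling remark are a repackaging of that same computation, with somewhat more care about the a.e.\ versus everywhere and $\theta\in\Theta$ bookkeeping that the paper passes over silently.
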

\begin{proof}\\
Let $\theta^{*} = (a_{1}^{*},\ldots,a_{p}^{*},b_{1}^{*},\ldots,b_{p}^{*})^T$.
Let
$$
m(\theta,t)=\frac{ S_\theta(t)_{2}-O_2(t)}{S_\theta(t)_{1}-O_1(t)}.
$$
Simple algebra gives that $\Psi [S_{\theta}(t),t]=\Psi [S_{\theta}^{*}(t),t]$ if and only if
$$
\sum_{k=1}^{p}(b_{k}-b_{k}^{*})e_{k}(t)-\sum_{k=1}^{p}(a_{k}-a_{k}^{*})e_{k}(t)m(\theta^{*},t)=0,
$$
so that Assumption \ref{ass:obs} holds if and only if the functions
$e_{1}(t),\ldots,e_{p}(t), e_{1}(t)m(\theta^{*},t),\ldots,e_{p}(t)m(\theta^{*},t)$ are linearly independent
in the space of continuous functions on $[0,1]$.\\
Also, for $i=1,\ldots,p$:
$$
\frac{\partial}{\partial a_{i}} \arctan m(\theta^{*},t) = - \left(\frac{1}{1+m(\theta^{*},t)^{2}}\right)\left(\frac{1}{S_\theta(t)_{1}-O_1(t)}\right)
e_{i}(t)m(\theta^{*},t)
$$
and
$$
\frac{\partial}{\partial b_{i}} \arctan m(\theta^{*},t) = \left(\frac{1}{1+m(\theta^{*},t)^{2}}\right)\left(\frac{1}{S_\theta(t)_{1}-O_1(t)}\right)
e_{i}(t),
$$
so that $I_R(\theta^*)$
is non singular if and only if the functions
$e_{1}(t),\ldots,e_{p}(t), e_{1}(t)m(\theta^{*},t),\ldots,e_{p}(t)m(\theta^{*},t)$ are linearly independent
in the space of continuous functions on $[0,1]$, which ends the proof.
\end{proof}

Thus under model (\ref{eq:linmod}), if the trajectory of the observer is such that $O_{2}(t)-\sum_{k=1}^{p}b_{k}^{*}e_{k}(t)\neq 0$ for all $t\in [0,1]$ and Assumption \ref{ass:obs} holds,
Assumptions \ref{ass:consi} and \ref{ass:lsclt} hold and
$I_R(\theta^*)$ is non singular.\\

What remains to be seen is whether Assumption \ref{ass:esp} holds, and it is the case under a simple assumption
on the distribution of the trajectory noise:
\begin{assumption}
\label{ass:bruit2}
$\e_{1}$ has an isotropic distribution in $\R^{2}$.
\end{assumption}
We introduce some prior knowledge on the trajectory and on the variance of the trajectory noise to be able
to obtain conservative confidence regions.
\begin{assumption}
\label{ass:dist}
The trajectory model $(t,\theta) \mapsto S_\theta(t)$ is such that for all $(\theta,t)\in \Theta\times [0,1]$,
$\|O(t)-S_\theta(t)\|\geq R_{\min}$, and a constant number $A^{2}$  such that
\begin{equation*}
\pi^{2}\left(1+\pi^{-2/3}\right)^{3}  \frac{\esp \|\e_1\|^2}{R_{\min}^{2}} \leq A^2 
\end{equation*}
is known.
\end{assumption}
This condition makes sense since in the context of passive tracking one usually assumes that the distance between target and observer is quite large. 

\begin{thm}
\label{theo:BOT:1}
If the trajectory model $(t,\theta)\mapsto S_{\theta}(t)$ and the move of the observer are such that
Assumptions \ref{ass:obs},  \ref{ass:consi}, \ref{ass:lsclt} and \ref{ass:dist} hold and $I_R(\theta^*)$ is non singular,\\
or if the trajectory model is (\ref{eq:linmod}), the trajectory of the observer is such that $O_{2}(t)-\sum_{k=1}^{p}b_{k}^{*}e_{k}(t)\neq 0$ for all $t\in [0,1]$ and Assumption \ref{ass:obs} holds,\\
if moreover Assumption \ref{ass:bruit1} and \ref{ass:bruit2} hold,\\
then for any $\alpha >0$, if $C_{\alpha}$ is a region with coverage $1-\alpha$ for the standard gaussian
distribution in $\R^{m}$, then
$$
\liminf_{n\rightarrow +\infty} \PPst\left(\frac{\sqrt{n}}
{\sqrt{A^2+\sigma^{2}}}{I_{R}}^{1/2}(\overline{\theta}_{n})\left(\overline{\theta}_{n}-\theta^*\right) \in C_{\alpha}\right) \geq 1-\alpha.
$$
\end{thm}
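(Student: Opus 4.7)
\emph{Proof plan.} The strategy is to reduce Theorem \ref{theo:BOT:1} to Theorem \ref{theo:ls:clt} together with the conservative-variance observation following it: once the bound (\ref{eq:maj}) is established with the stated constant $A^2$, one has $I_\Psi(\theta^*) \leq A^2 I_R(\theta^*)$ in the positive-definite order, hence $I_M^{-1}(\theta^*) \leq (A^2+\sigma^2) I_R^{-1}(\theta^*)$, and after normalising by $I_R(\overline{\theta}_n)^{1/2}/\sqrt{A^2+\sigma^2}$ the limit covariance is dominated by the identity. The real work therefore consists of (i) verifying the hypotheses of Theorem \ref{theo:ls:clt} in the BOT setting, (ii) deriving Assumption \ref{ass:esp} from the isotropy Assumption \ref{ass:bruit2}, (iii) proving (\ref{eq:maj}) with exactly the constant appearing in Assumption \ref{ass:dist}, and (iv) substituting $I_R(\overline{\theta}_n)$ for $I_R(\theta^*)$ and invoking a Gaussian comparison for the confidence region.

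\emph{Steps (i) and (ii).} Under the first alternative, Assumptions \ref{ass:obs}, \ref{ass:consi}, \ref{ass:lsclt} and the non-singularity of $I_R(\theta^*)$ are postulated outright. Under the second, the proposition just proved identifies the non-singularity of $I_R(\theta^*)$ with Assumption \ref{ass:obs}, while the discussion immediately preceding it yields Assumptions \ref{ass:consi} and \ref{ass:lsclt} from the smoothness of $S_\theta$ together with the non-vanishing condition $O_2(t)-\sum_k b_k^* e_k(t)\neq 0$, which keeps the denominator of (\ref{eq:psi:bot}) away from $0$. For (ii), set $v = S_\theta(t)-O(t)$, let $\alpha$ be its angle relative to the fixed direction, and consider the reflection $R$ of $\R^2$ fixing the line spanned by $v$. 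Then $Rv = v$, $R$ sends angle $\phi$ to $2\alpha-\phi$, and by Assumption \ref{ass:bruit2} $R\varepsilon_1 \stackrel{d}{=} \varepsilon_1$; hence $\mathrm{angle}(v+\varepsilon_1) \stackrel{d}{=} \mathrm{angle}(v+R\varepsilon_1) = 2\alpha - \mathrm{angle}(v+\varepsilon_1)$, so its expectation is $\alpha = \Psi(S_\theta(t),t)$.

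\emph{Step (iii).} Rotating coordinates around $O(t)$, one may take $v = (r,0)$ with $r=\|v\|\geq R_{\min}$; then $\Psi(v+\varepsilon_1,t)-\Psi(v,t) = \arctan(\varepsilon_{1,2}/(r+\varepsilon_{1,1}))$, up to a $\pi$-shift on $\{r+\varepsilon_{1,1}<0\}$ which is harmless because the difference of two $\arctan$s is bounded by $\pi$ anyway. Split the expectation according to whether $\|\varepsilon_1\|\leq \lambda r$ or not, for a free parameter $\lambda\in(0,1)$. On the small-noise event, $|\arctan u|\leq |u|$ together with $|r+\varepsilon_{1,1}|\geq (1-\lambda)r \geq (1-\lambda)R_{\min}$ gives the squared bound $\|\varepsilon_1\|^2/((1-\lambda)^2 R_{\min}^2)$; on its complement the crude bound $\pi^2$ and Markov's inequality yield $\pi^2\,\esp\|\varepsilon_1\|^2/(\lambda^2 R_{\min}^2)$. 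Summing and optimising over $\lambda$ gives $\lambda^* = \pi^{2/3}/(1+\pi^{2/3})$ with minimum $(1+\pi^{2/3})^3 = \pi^2(1+\pi^{-2/3})^3$, exactly $A^2 R_{\min}^2/\esp\|\varepsilon_1\|^2$.

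\emph{Step (iv) and main difficulty.} Theorem \ref{theo:ls:clt} and (\ref{eq:maj}) yield $\sqrt{n}(\overline{\theta}_n-\theta^*) \to \mathcal{N}(0,\Sigma)$ with $\Sigma \leq (A^2+\sigma^2) I_R^{-1}(\theta^*)$ in the PSD order. Continuity of $\theta\mapsto I_R(\theta)$ (Assumption \ref{ass:lsclt}) and consistency of $\overline{\theta}_n$ let me substitute $I_R(\overline{\theta}_n)$ for $I_R(\theta^*)$ via Slutsky's lemma, so $(A^2+\sigma^2)^{-1/2}\sqrt{n}\,I_R(\overline{\theta}_n)^{1/2}(\overline{\theta}_n-\theta^*)$ converges weakly to a centered Gaussian whose covariance is dominated by $I_m$. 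Anderson's lemma, applied to the confidence region $C_\alpha$ (implicitly convex and symmetric, as is customary), then delivers $\liminf\PPst(\cdots\in C_\alpha)\geq 1-\alpha$. The main obstacle is step (iii): the specific constant of Assumption \ref{ass:dist} emerges only through the precise truncation-and-optimisation argument, and a secondary subtlety is the $\arctan$ branch issue near $r+\varepsilon_{1,1}=0$, resolved by the crude $\pi^2$ bound on the large-noise piece.
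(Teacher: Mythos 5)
Your proposal is correct and follows essentially the same route as the paper: Assumption \ref{ass:esp} is obtained from the isotropy of $\e_1$ by a symmetry argument (your reflection across the line through $S_\theta(t)-O(t)$ is the geometric form of the paper's odd-in-$\alpha$ polar-coordinate integrand), the bound (\ref{eq:maj}) is proved by exactly the same truncation at $\|\e_1\|\le a\|O(t)-S_{\theta^*}(t)\|$ combined with the crude $\pi^2$ bound, Chebyshev's inequality and optimisation over $a$ yielding $\pi^2(1+\pi^{-2/3})^3$, and the conclusion follows from the conservative-variance remark after Theorem \ref{theo:ls:clt}. The only divergence is that you make explicit the appeal to Anderson's lemma and the (implicit) convexity and symmetry of $C_\alpha$, a detail the paper leaves unstated.
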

\begin{proof}
\\
Under Assumption \ref{ass:bruit2}, let the density of  $\varepsilon_1$ be $F(\|\e\|)$. Recall that the trajectory of the observer is
$(O(t) )_{t \in [0,1]}$. Let $\beta (t)=\arctan [S_\theta(t)_{2}-O_2(t)]/[S_\theta(t)_{1}-O_1(t)]=\Psi[S_\theta(t),t]$.
\begin{multline*}
     \esp \{\Psi[S_\theta(t)+\e_{1},t]\} = \int\!\!\! \int_{\R \times (-\pi,\pi)}  \arctan\left[
   \frac{ S_\theta(t)_{2}-O_2(t)+r \sin \alpha}{S_\theta(t)_{1}-O_1(t)+r\cos \alpha}
     \right] F(r)\, rdr d\alpha \, , \\
      = \beta (t) +\int\!\!\! \int_{\R \times (-\pi,\pi)}  \arctan \left( \frac{ r \sin( \alpha- \beta(t))}{ \|O(t)-S_\theta(t)\| +r\cos(\alpha- \beta(t))} \right) F(r)\, rdr d\alpha.
\end{multline*}

Let
\begin{equation*}
    G_{\theta,t}(r,\alpha) = \arctan \left( \frac{ r \sin \alpha}{ \|O(t)-S_\theta(t)\| +r\cos \alpha} \right).
\end{equation*}
Then,
\begin{equation*}
    \esp \{\Psi[S_\theta(t)+\e_{1},t]\} = \Psi[S_\theta(t),t] +\int\!\!\! \int_{\R \times (-\pi,\pi)} G_{\theta,t}(r,\alpha) F(r)\, rdr d\alpha .
\end{equation*}
But for any $r>0$,  for any $\alpha$, $G_{\theta,t}(r,-\alpha)=G_{\theta,t}(r,\alpha)$ so that
\begin{equation*}
    \esp \{\Psi[S_\theta(t)+\e_{1},t]\} = \Psi[S_\theta(t),t].
\end{equation*}
Now,
$$
\Psi[S_{\theta^* }(t)+\e_{1},t] -\Psi[S_{\theta^* }(t),t]=\int_{0}^{1}\nabla_x \Psi[S_{\theta^* }(t)+h\e_{1},t]^{T}
\e_1 dh,
$$
and direct calculations provide
\begin{equation*}
    \left\| \nabla_x \Psi[x,t] \right\| =  \|O(t)-x\|^{-1}.
\end{equation*}
Thus for any $a\in ]0,1[$:
\begin{eqnarray*}
\esp \left\{ \Psi[S_{\theta^* }(t)+\e_{1},t] -\Psi[S_{\theta^* }(t),t]\right\}^2 &\leq &
\pi^{2}\PP\left(\|\e_{1}\|\geq a \|O(t)-\Psi[S_{\theta^* }(t)\|\right) \\
&&+ \frac{\esp \|\e_1\|^2}{(1-a)^{2}\|O(t)-\Psi[S_{\theta^* }(t)\|^{2}}\\
&\leq &
\pi^{2}\PP\left(\|\e_{1}\|\geq a R_{\min}\right) + \frac{\esp \|\e_1\|^2}{(1-a)^{2}R_{\min}^{2}}
\end{eqnarray*}
since $|\Psi(u)-\Psi (v)|\leq \pi$ for any real numbers $u$ and $v$, and by using the triangular inequality
and Assumption \ref{ass:dist}. \\
But Tchebychev inequality leads to
\begin{equation}
\label{eq:conservative_majoration}
\esp \left\{ \Psi[S_{\theta^* }(t)+\e_{1},t] -\Psi[S_{\theta^* }(t),t]\right\}^2
\leq \frac{\esp \|\e_1\|^2}{R_{\min}^{2}}\left(\frac{\pi^{2}}{a^{2}}+\frac{1}{(1-a)^{2}}\right)
\end{equation}
which is minimum for $a=\frac{1}{1+\pi^{-2/3}}$ leading to $\left(\frac{\pi^{2}}{a^{2}}+\frac{1}{(1-a)^{2}}\right)=\pi^{2}\left(1+\pi^{-2/3}\right)^{3}$ and
$$
\esp \left\{ \Psi[S_{\theta^* }(t)+\e_{1},t] -\Psi[S_{\theta^* }(t),t]\right\}^2
\leq A^{2}.
$$
To conclude one may apply the concluding remark of Section \ref{subsec:ls:as} to obtain asymptotic conservative confidence regions for $\theta$.\\
\end{proof}

\section{Likelihood and efficiency}
\label{sec:lik}
Let $\cal F$ be the set of probability densities $f$ on $\R^{d}$ such that
for all $t\in [0,1]$, for all $\theta \in \Theta$,
\begin{equation}
\label{eq:dens}
\int_{\R^{d}} \Psi[S_\theta(t)+\e ,t] f \left( \e \right) d\e =  \Psi[S_\theta(t),t].
\end{equation}
We will replace Assumptions \ref{ass:bruit1} and \ref{ass:esp} by
\begin{assumption}
\label{ass:bruit3}
$(\varepsilon_k )_{k\in\N}$ is a sequence of i.i.d. random variables with density $f^{*}\in {\cal F}$.
\end{assumption}
The normalized log-likelihood is the function on $\Theta \times {\cal F}$
\begin{equation}
\label{eq:LogLikelihood}
    J_n(\theta,f) = \frac{1}{n} \sum_{k=1}^{n} \log \left( \int g \left\{ Y_k-\Psi[S_\theta(t_k)+u,t_k] \right\} \, f(u)du \right).
\end{equation}
Define
$$
G\left((\varepsilon, V), t; \theta\right)=  \log \left(  \int g\left\{ \Psi[S_{\theta^*}(t)+\e,t]+V- \Psi[S_\theta(t)+u,t] \right\}\,f(u)\,du \right),
$$
where $(\epsilon,V)$ has the same distribution as $(\epsilon_{1},V_{1})$.\\
As soon as for any $(\theta,f)\in \Theta \times {\cal F}$, it is possible to apply
Lemma \ref{lem:1} to
$G\left((\cdot), \cdot;\theta\right)$,
$J_n(\theta,f)$ converges in probability to
\begin{equation}
\label{eq:liklim}
    J(\theta , f) =  \int_{0}^{1} \int_{\R^{d}} \int_{\R}
    \log \left(  \int g\left\{ \Psi[S_{\theta^*}(t)+\e,t]+v- \Psi[S_\theta(t)+u,t] \right\}\,f(u)\,du \right) \,g(v)f^{*}(\e) \, dv\, d\e\, dt.
\end{equation}
Let
$$p_{(\theta,f)}\left(z, t\right)=\int g\left\{ z- \Psi[S_\theta(t)+u,t] \right\}\,f(u)\,du
$$
be the density, for fixed $t$, of the random variable $Z=\Psi[S_\theta(t)+U,t]+V$ where $U$ is a random
variable in $\R^{d}$ with density $f$ independent of the random variable $V$ in $\R$ with density $g$.
Thus, $p_{(\theta^{*},f^{*})}\left(\cdot , t_k\right)$ is the probability density of $Y_k$.
Then, the change of variable $z=\Psi[S_{\theta^*}(t)+\e,t]+v$ in
$ \int_{\R}
    \log \left(  \int g\left\{ \Psi[S_{\theta^*}(t)+\e,t]+v- \Psi[S_\theta(t)+u,t] \right\}\,f(u)\,du \right) \,g(v) \, dv$ leads to
$$
 J(\theta , f) = \int\left[\int p_{(\theta^{*},f^{*})}\left(z, t\right) \log p_{(\theta,f)}\left(z, t\right) dz
 \right] dt.
$$
Thus, for any $(\theta,f)\in \Theta \times {\cal F}$,
$$
J(\theta^* , f^*)\geq J(\theta , f),
$$
and $J(\theta^* , f^*)= J(\theta , f)$ if and only if $t$ a.e. $p_{(\theta,f)}\left(z, t\right)=p_{(\theta^{*},f^{*})}\left(z, t\right)$ $z$ a.e., that is the probability distribution of
$\Psi[S_\theta(t)+U,t]+V$,where $U$ is a random
variable in $\R^{d}$ with density $f$ independent of the random variable $V$ in $\R$ with density $g$, is the
same as that of $\Psi[S_{\theta*}(t)+U^{*},t]+V$,where $U^*$ is a random
variable in $\R^{d}$ with density $f^*$ independent of the random variable $V$ in $\R$ with density $g$.
But if $f\in{\cal F}$ and $f^{*}\in{\cal F}$, taking expectations
leads to the fact that, $t$ a.e., $\Psi[S_\theta(t),t]=\Psi[S_{\theta*}(t),t]$, so that $\theta = \theta^*$ if
Assumption \ref{ass:obs} holds. In other words, $J(\theta , f)$ is maximum only for $\theta = \theta^*$.\\
Following the same lines as for the LSE, we may thus easily obtain that, if the probability  density $f^*$ is known, the
parametric maximum likelihood
estimator is consistent and asymptotically gaussian.
Define the parametric maximum likelihood
estimator as :
$$
\tilde{\theta}_n = \arg \max_{\theta \in \Theta} J_n (\theta, f^*).
$$
where $\arg \max_{\theta \in \Theta} J_n(\theta, f^*)$ is any maximizer of $J_{n}(\cdot, f^*)$.\\
If for any $\theta\in\Theta$, there exists a small open ball containing $\theta$ such that
Lemma \ref{lem:1} applies to
$\sup_{\theta\in U}G\left((\cdot), \cdot;\theta\right)$, it is possible, as in \cite{vandervaart:1998}
Theorem 5.14,
to strengthen the convergence of $J_{n}(\theta, f^*)$ to $J(\theta, f^*)$ in a uniforme one. The consistency of
$\tilde{\theta}_n$ follows:
\begin{thm}
\label{consipara}
Under assumptions \ref{ass:obs} and \ref{ass:bruit3}, if moreover
Lemma \ref{lem:1} applies to
$\sup_{\theta\in U}G\left((\cdot), \cdot;\theta\right)$, then the estimator $\tilde{\theta}_n$
is consistent.
\end{thm}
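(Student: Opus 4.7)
The plan is to apply a classical M-estimation consistency argument in the spirit of van der Vaart Theorem 5.14, exactly as was done for the LSE in Section \ref{subsec:ls:cons}. Three ingredients are needed: (i) the limiting criterion $\theta \mapsto J(\theta,f^{*})$ has a well-separated maximum at $\theta^{*}$; (ii) pointwise convergence in probability of $J_{n}(\theta,f^{*})$ to $J(\theta,f^{*})$; (iii) strengthening of (ii) to uniform convergence over $\Theta$.

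Ingredient (i) is essentially already done in the discussion preceding the theorem. By Jensen's inequality applied to $-\log$, one has $J(\theta^{*},f^{*}) \geq J(\theta,f^{*})$, with equality only if, for Lebesgue-almost every $t$, the densities $p_{(\theta,f^{*})}(\cdot,t)$ and $p_{(\theta^{*},f^{*})}(\cdot,t)$ coincide. Taking expectations in this identity and using $f^{*}\in \mathcal{F}$ forces $\Psi[S_{\theta}(t),t]=\Psi[S_{\theta^{*}}(t),t]$ a.e., so Assumption \ref{ass:obs} gives $\theta=\theta^{*}$. Combined with continuity of $\theta \mapsto J(\theta,f^{*})$ (itself a consequence of dominated convergence once one has a local integrable envelope for $G$, which is provided by the hypothesis of the theorem) and compactness of $\Theta$, this yields the well-separation property required by Theorem 5.7 of \cite{vandervaart:1998}.

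For ingredient (ii), pointwise convergence follows immediately from Lemma \ref{lem:1} applied at each fixed $\theta$ to $G((\varepsilon,V),t;\theta)$. To obtain (iii), I would replicate the localization argument used to derive (\ref{eq:mn}): for each $\theta \in \Theta$ the hypothesis supplies an open ball $U_{\theta}$ on which Lemma \ref{lem:1} applies to $\sup_{\theta' \in U_{\theta}} G$; by symmetry (replacing $G$ by $-G$ is harmless here, or by passing to $\sup|G - J(\theta,f^{*})|$) one also controls the infimum. Compactness of $\Theta$ extracts a finite subcover $U_{1},\ldots,U_{N}$, and on each $U_{i}$ one sandwiches $J_{n}(\theta,f^{*})$ between $\frac{1}{n}\sum_{k}\inf_{\theta' \in U_{i}} G((\varepsilon_{k},V_{k}),t_{k};\theta')$ and the analogous supremum, both of which converge in probability to the corresponding integrals by Lemma \ref{lem:1}. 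Letting the radii of the $U_{i}$ shrink and using continuity of $J(\cdot,f^{*})$ gives the uniform convergence $\sup_{\theta \in \Theta}|J_{n}(\theta,f^{*})-J(\theta,f^{*})| = \opst(1)$.

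With (i) and (iii) in hand, consistency of $\tilde{\theta}_{n}$ is immediate from \cite{vandervaart:1998} Theorem 5.7. The main obstacle is the localization/covering step for (iii): one has to verify that the open cover provided by the hypothesis can be refined finely enough to make the gap between the $\sup$- and $\inf$-brackets small, which is the content of the continuity of $\theta \mapsto J(\theta,f^{*})$ on a compact set. Everything else is routine once the uniform domination provided by the assumption on $\sup_{\theta \in U} G$ is accepted.
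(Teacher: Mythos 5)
Your proposal is correct and follows essentially the same route as the paper: identification of $\theta^*$ as the unique maximizer of $J(\cdot,f^*)$ via the Kullback--Leibler/Jensen argument combined with Assumption \ref{ass:obs} and the defining property of $\mathcal{F}$, pointwise convergence from Lemma \ref{lem:1}, a covering/bracketing argument in the spirit of \cite{vandervaart:1998} Theorem 5.14 using the hypothesis on $\sup_{\theta\in U}G$, and then the standard argmax consistency theorem. The paper leaves these steps implicit ("following the same lines as for the LSE"), and you have simply filled them in faithfully.
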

We will use the notation $Y(t)$ for $Y(t)=\Psi[S_{\theta^*}(t)+\e_{1},t]+V_{1}$ to simplify the writing of
some integrals.
We shall introduce the assumptions we need to prove the asymptotic distribution of $\tilde{\theta}_n$:
\begin{assumption}
\label{ass:paraclt}
For all $(z,t)\in \R \times [0,1]$, the function $\theta \mapsto p_{(\theta,f^*)}\left(z, t\right)$ is twice
continuously differentiable.\\
For any $\theta \in \Theta$, $t\mapsto \esp \|\nabla_{\theta} \log p_{(\theta,f^*)}\left(Y(t), t\right)\|^{2}$ is finite and continuous.\\
There exists a neighborhood $U$ of $\theta^*$ such that for all $\theta\in U$,
$t\mapsto \esp  D^{2}_{\theta} \log p_{(\theta,f^*)}\left(Y(t), t\right)$ is finite and continuous.\\
Lemma \ref{lem:1} applies to $\log p_{(\theta,f)}\left(Y(t), t\right)$, for all $\theta$,
to $\|\nabla_{\theta} \log p_{(\theta,f^*)}\left(Y(t), t\right)\|^{2}$ and all components of
$D^{2}_{\theta} \log p_{(\theta,f^*)}\left(Y(t), t\right)$ for $\theta\in U$.
\end{assumption}
Introduce the parametric Fisher information matrix:
\begin{equation*}
I(\theta) =\int_{0}^{1} \esp \left[\frac{\nabla_\theta p_{(\theta,f^*)}}{p_{(\theta,f^*)}}\left(Y(t), t\right)
\frac{\nabla_\theta p_{(\theta,f^*)}}{p_{(\theta,f^*)}}\left(Y(t),t\right)\right] dt
\end{equation*}
\begin{thm}
\label{theo:lrtpara}
Under assumptions \ref{ass:obs}, \ref{ass:bruit3} and \ref{ass:paraclt},
$\tilde{\theta}_n$ converges in probability to $\theta^{*}$ as $n$ tends to infinity.\\
Moreover, if $I(\theta^*)$ is non singular,
\begin{equation*}
\sqrt{n}(\tilde{\theta}_n-\theta^*) = I^{-1}(\theta^*)\frac{1}{\sqrt{n}}\sum_{k=1}^{n}\frac{ \nabla_\theta p_{(\theta^*,f^*)}}{p_{(\theta^*,f^*)}}\left(Y_k,t_k\right) + \opst(1),
\end{equation*}
and
$\sqrt{n}(\tilde{\theta}_n-\theta^*)$ converges in distribution as $n$ tends to infinity to ${\cal N}(0,I^{-1}(\theta^*))$.
\end{thm}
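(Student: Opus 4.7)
The plan is to follow the same two-step strategy used for the least-squares estimator: first obtain consistency of $\tilde\theta_n$, then linearize the score equation at $\theta^*$ by a Taylor expansion. For consistency, the identifiability argument preceding Theorem \ref{consipara} already shows that $J(\cdot,f^*)$ attains its unique maximum at $\theta^*$. Assumption \ref{ass:paraclt} guarantees via Lemma \ref{lem:1} that $J_n(\theta,f^*)\to J(\theta,f^*)$ in probability for each $\theta$, and the $C^2$-smoothness of $\theta\mapsto p_{(\theta,f^*)}$ together with compactness of $\Theta$ allows the pointwise convergence to be upgraded to uniform convergence by the same modulus-of-continuity argument that led to (\ref{eq:mn}). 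Consistency then follows from \cite{vandervaart:1998} Theorem 5.7.

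For the asymptotic distribution, I would next note that on $\{\tilde\theta_n\in U\}$, which has probability tending to $1$, the first-order condition $\nabla_\theta J_n(\tilde\theta_n,f^*)=0$ combined with a Taylor expansion yields
$$
0 = \nabla_\theta J_n(\theta^*,f^*) + \left(\int_0^1 D^{2}_\theta J_n[\theta^*+s(\tilde\theta_n-\theta^*),f^*]\,ds\right)(\tilde\theta_n-\theta^*).
$$
The score at $\theta^*$ is $\nabla_\theta J_n(\theta^*,f^*)=\frac{1}{n}\sum_{k=1}^{n}\frac{\nabla_\theta p_{(\theta^*,f^*)}}{p_{(\theta^*,f^*)}}\left(Y_k,t_k\right)$. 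Differentiating the identity $\int p_{(\theta,f^*)}(z,t)\,dz=1$ under the integral shows that each summand is centered with per-time second moment equal to the integrand defining $I(\theta^*)$. The integrability part of Assumption \ref{ass:paraclt} provides the Lindeberg condition, so the Lindeberg-Feller theorem applied to this triangular array gives $\sqrt{n}\,\nabla_\theta J_n(\theta^*,f^*)\Rightarrow \mathcal{N}(0,I(\theta^*))$.

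For the Hessian, the Fisher identity obtained by differentiating the score identity one further time yields $\esp D^{2}_\theta\log p_{(\theta^*,f^*)}(Y(t),t) = -\esp\bigl[\frac{\nabla_\theta p_{(\theta^*,f^*)}}{p_{(\theta^*,f^*)}}(Y(t),t)\frac{\nabla_\theta p_{(\theta^*,f^*)}}{p_{(\theta^*,f^*)}}(Y(t),t)^{T}\bigr]$, so Lemma \ref{lem:1} applied componentwise gives $D^{2}_\theta J_n(\theta^*,f^*)\to -I(\theta^*)$ in probability. Consistency of $\tilde\theta_n$, continuity of $(\theta,t)\mapsto D^{2}_\theta\log p_{(\theta,f^*)}(Y(t),t)$ and a modulus-of-continuity argument analogous to the one used for the LSE let me replace $\theta^*+s(\tilde\theta_n-\theta^*)$ by $\theta^*$ up to $\opst(1)$. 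Solving for $\sqrt{n}(\tilde\theta_n-\theta^*)$ and invoking Slutsky's lemma then produce the claimed linearization and the limit law $\mathcal{N}(0,I^{-1}(\theta^*))$.

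The main obstacle will be justifying the two exchanges of differentiation and integration that underpin the score-centering property and the Fisher identity: both require a domination argument for $\nabla_\theta p_{(\theta,f^*)}$ and $D^{2}_\theta p_{(\theta,f^*)}$ uniformly on a neighborhood of $\theta^*$, which must be drawn from the integrability and continuity hypotheses of Assumption \ref{ass:paraclt} together with the smoothness of the convolution defining $p_{(\theta,f^*)}$. Once these analytic identities are secured, the remainder is a routine combination of Lemma \ref{lem:1}, the Lindeberg-Feller theorem and Slutsky's lemma.
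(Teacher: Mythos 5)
Your proposal is correct and is essentially the argument the paper intends: the authors state only that ``the proof follows the same lines as that of Theorems \ref{theo:ls:cons} and \ref{theo:ls:clt} and is left to the reader,'' and your plan (uniform convergence of $J_n$ plus the argmax theorem for consistency, then a Taylor expansion of the score with Lindeberg--Feller for the score term, Lemma \ref{lem:1} for the Hessian, and Slutsky) is exactly that LSE argument transposed to the likelihood, with the score-centering and Fisher identities supplying the roles that Assumption \ref{ass:esp} played there. No discrepancy with the paper's approach.
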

The proof follows the same lines as that of Theorems \ref{theo:ls:cons} and \ref{theo:ls:clt} and is left to the reader.\\
Notice that under the same assumptions, it is easy to prove that the parametric model is locally asymptotically normal in the sense
of Le Cam (see \cite{lecam:1986}) so that if $I(\theta^*)$ is singular, there exist no regular estimator of $\theta$ which is
$\sqrt{n}$-consistent. Thus if $I_{R}(\theta^*)$ is non singular and the assumptions in Theorem \ref{theo:ls:clt}
hold,
in which case
the LSE is regular $\sqrt{n}$-consistent, then $I(\theta^*)$ is also non singular.\\

To investigate the optimality of possible estimators in the semiparametric situation, with $f^*$ unknown but known to belong to $\cal F$, we use Le Cam's theory as developed for non i.i.d. observations by Mc Neney and Wellner \cite{mcneney:wellner:2000}.
Introduce the set $\cal B$ of integrable functions $b$ on $\R^d$ such that:
\begin{itemize}
\item $\int b(u)du =0$ and $\exists \delta >0,\; f^{*}+\delta b \geq 0$,
\item
for all $t\in [0,1]$, for all $\theta \in \Theta$,
$$
\int_{\R^{d}} \Psi \left[ S_{\theta}(t)+\e,t\right] b(\e)d\e =0.
$$
\item
$$
\int_{0}^{1}\esp \left(\frac{\int g(Y(t)-\Psi [ S_{\theta^{*}}(t)+u,t ])b(u)du}
{\int g(Y(t)-\Psi [ S_{\theta^{*}}(t)+u,t ])f^{*}(u)du}\right)^{2}   dt < \infty .
$$
\end{itemize}
Let ${\cal H}=\R^{m} \times {\cal B}$ be endowed with the inner product
\begin{eqnarray*}
\langle (a_{1},b_{1}),(a_{2},b_{2})\rangle_{{\cal H}} &=\int_{0}^{1}\esp &\left\{\left(\frac{ \nabla_\theta p_{(\theta^*,f^*)}^{T}}{p_{(\theta^*,f^*)}}\left(Y(t),t \right)\cdot a_{1} + \frac{\int g(Y(t)-\Psi [ S_{\theta^{*}}(t)+u,t ])b_{1}(u)du}
{\int g(Y(t)-\Psi [ S_{\theta^{*}}(t)+u,t ])f^{*}(u)du}\right)\right.\\
&&\left.\left(\frac{ \nabla_\theta p_{(\theta^*,f^*)}^{T}}{p_{(\theta^*,f^*)}}\left(Y(t),t \right)\cdot a_{2} + \frac{\int g(Y(t)-\Psi [ S_{\theta^{*}}(t)+u,t ])b_{2}(u)du}
{\int g(Y(t)-\Psi [ S_{\theta^{*}}(t)+u,t ])f^{*}(u)du}\right)
\right\}dt.
\end{eqnarray*}
We will need only local smoothness, so we introduce:
\begin{assumption}
\label{ass:lan}
There exists a neighborhood $U$ of $\theta^*$ such that for $\theta \in U$:\\
For all $(z,t)\in \R \times [0,1]$, the function $\theta \mapsto p_{(\theta,f^*)}\left(z, t\right)$ is twice
continuously differentiable.\\
$t\mapsto \esp \|\nabla_{\theta} \log p_{(\theta,f^*)}\left(Y(t), t\right)\|^{2}$ is finite and continuous.\\
$t\mapsto \esp  D^{2}_{\theta} \log p_{(\theta,f^*)}\left(Y(t), t\right)$ is finite and continuous.\\
For any $b\in {\cal B}$, for all $(z,t)\in \R \times [0,1]$, $\theta \mapsto \int g(z-\Psi \left[ S_{\theta}(t)+u,t\right])b(u)du$
is continuously differentiable and
$t\mapsto \esp\left\|\frac{\nabla_{\theta}\int g(Y(t)-\Psi \left[ S_{\theta}(t)+u,t\right])b(u)du}{p_{(\theta^*,f^*)}(Y(t),t)}\right\|$ is finite and continuous.\\
Lemma \ref{lem:1} applies to  $\|\nabla_{\theta} \log p_{(\theta,f^*)}\left(Y(t), t\right)\|^{2}$,all components of
$D^{2}_{\theta} \log p_{(\theta,f^*)}\left(Y(t), t\right)$ and
$\left\|\frac{\nabla_{\theta}\int g(Y(t)-\Psi \left[ S_{\theta}(t)+u,t\right])b(u)du}{p_{(\theta^*,f^*)}(Y(t),t)}\right\|$
for $\theta\in U$.
\end{assumption}
Let $\PP_{n,(\theta,f)}$ be the distribution of $Y_{1},\ldots,Y_{n}$ when the parameter is $\theta$ and the density of
the trajectory noise is $f$. For $(\theta,f)\in \Theta \times {\cal F}$, let
$$\Lambda_{n} \left(\theta , f\right)=\log \frac{d\PP_{n,(\theta,f)}(Y_{1},\ldots,Y_{n})}{d\PP_{n,(\theta^*,f^*)}(Y_{1},\ldots,Y_{n})}
= J_{n}\left(\theta , f\right)-J_{n}\left(\theta^{*} , f^{*} \right).$$
Then
\begin{prop}
\label{prop:lan}
Assume that Assumption \ref{ass:lan}  holds. Then the sequence of statistical models $(\PP_{n,(\theta,f)})_{\theta\in\Theta, f\in{\cal F}}$ is locally asymptotically
normal with tangent space ${\cal H}$, that is, for $(a,b)\in{\cal H}$,
$$
\Lambda_{n}\left(\theta^* + \frac{a}{\sqrt{n}}, f^* + \frac{b}{\sqrt{n}}\right)
= W_{n}\left(a,b\right) - \frac{1}{2}\|\left(a,b\right)\|^{2}_{\cal H}+ \opst(1),
$$
where
$$
W_{n}\left(a,b\right)=\frac{1}{\sqrt{n}}\sum_{k=1}^{n} \left(\frac{ \nabla_\theta p_{(\theta^*,f^*)}^{T}}{p_{(\theta^*,f^*)}}\left(Y_k,t_k \right)\cdot a  + \frac{\int g(Y_{k}-\Psi [ S_{\theta^{*}}(t_k)+u,t_k ])b (u)du}
{\int g(Y_{k}-\Psi [ S_{\theta^{*}}(t_k)+u,t_k ])f^{*}(u)du}\right)
$$
and for any finite subset $h_1,\ldots,h_q \in {\cal H}$, the random vector $(W_n(h_1),\ldots,W_n(h_q))$ converges in distribution to the centered Gaussian vector with covariance $\langle h_i,h_j \rangle_{{\cal H}}$.
\end{prop}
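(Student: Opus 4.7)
The plan is to perform a second-order Taylor expansion of each log-likelihood ratio summand in the perturbation size $1/\sqrt{n}$, identify the linear part with $W_{n}(a,b)$ and the quadratic part with $\tfrac{1}{2}\|(a,b)\|_{\cal H}^{2}$, and then apply the Cram\'er--Wold device together with a Lindeberg--Feller CLT. A key simplification is that $f$ enters the mixture density $p_{(\theta,f)}(z,t)=\int g(z-\Psi[S_{\theta}(t)+u,t])f(u)du$ linearly, so the $b$-perturbation is exact; the only genuine Taylor expansion is in $\theta$, and the two continuous $\theta$-derivatives needed are granted by Assumption \ref{ass:lan}.

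Concretely, I would write
\begin{equation*}
\frac{p_{(\theta^{*}+a/\sqrt{n},\,f^{*}+b/\sqrt{n})}(Y_{k},t_{k})}{p_{(\theta^{*},f^{*})}(Y_{k},t_{k})} = 1 + \frac{\zeta_{k}}{\sqrt{n}} + \frac{\rho_{k}}{n} + r_{n,k},
\end{equation*}
where $\zeta_{k}$ is the score-like summand defining $W_{n}(a,b)$, $\rho_{k}$ gathers the Hessian term $\tfrac{1}{2}a^{T}D_{\theta}^{2}p_{(\theta^{*},f^{*})}(Y_{k},t_{k})\,a\,/p_{(\theta^{*},f^{*})}(Y_{k},t_{k})$ together with the mixed term $a^{T}\nabla_{\theta}\int g(Y_{k}-\Psi[S_{\theta}(t_{k})+u,t_{k}])b(u)du$ evaluated at $\theta=\theta^{*}$ and divided by $p_{(\theta^{*},f^{*})}(Y_{k},t_{k})$, and $r_{n,k}$ is a Taylor remainder of pointwise order $n^{-3/2}$. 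Using $\log(1+x)=x-x^{2}/2+O(|x|^{3})$ and summing over $k$ yields
\begin{equation*}
\Lambda_{n}\Bigl(\theta^{*}+\tfrac{a}{\sqrt{n}},\,f^{*}+\tfrac{b}{\sqrt{n}}\Bigr) = W_{n}(a,b) + \frac{1}{n}\sum_{k=1}^{n}\rho_{k} - \frac{1}{2n}\sum_{k=1}^{n}\zeta_{k}^{2} + \opst(1).
\end{equation*}
By Lemma \ref{lem:1} together with Assumption \ref{ass:lan}, $\tfrac{1}{n}\sum_{k}\zeta_{k}^{2}$ converges in probability to $\|(a,b)\|_{\cal H}^{2}$. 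Both pieces making up $\rho_{k}$ have zero mean: differentiating twice the normalization identity $\int p_{(\theta,f^{*})}(z,t)dz=1$ at $\theta^{*}$ kills the Hessian expectation, while $\int b(u)du=0$ kills the mixed one. A further application of Lemma \ref{lem:1} therefore makes the $\rho_{k}$-average vanish in probability.

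For the finite-dimensional CLT, by Cram\'er--Wold it suffices to treat a single $W_{n}(a,b)$. The $\zeta_{k}/\sqrt{n}$ are independent and centered (same score identities), their cumulated variance converges to $\|(a,b)\|_{\cal H}^{2}$ by Lemma \ref{lem:1}, and the Lindeberg condition is exactly the uniform tail assumption already built into Lemma \ref{lem:1} and guaranteed by the second-moment continuity requirement in Assumption \ref{ass:lan}. The main technical obstacle is controlling $\sum_{k}r_{n,k}$; a brute-force bound would call for a third derivative. One sidesteps this by expressing $r_{n,k}$ as $\tfrac{1}{n}$ times the oscillation of the Hessian $D_{\theta}^{2}\log p_{(\theta,f^{*})}$ over a $1/\sqrt{n}$-neighborhood of $\theta^{*}$, and then using the continuity in $\theta$ of this Hessian granted by Assumption \ref{ass:lan}, combined with one further application of Lemma \ref{lem:1}, to push $\sum_{k}r_{n,k}$ to $\opst(1)$.
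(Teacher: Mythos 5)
Your proposal is correct and follows essentially the same route as the paper's proof: exploit the linearity of $p_{(\theta,f)}$ in $f$ so that only the $\theta$-dependence needs a Taylor expansion, expand $\log(1+x)$ to second order, identify the linear term with $W_n(a,b)$, use Lemma \ref{lem:1} to send the quadratic term to $-\tfrac{1}{2}\|(a,b)\|^2_{\cal H}$, and obtain the finite-dimensional limits from the Lindeberg--Feller theorem. Your treatment is in fact more explicit than the paper's (which compresses all of this into one display and a sentence), notably in verifying that the order-$1/n$ drift terms are centered via the normalization identity and $\int b=0$, and in controlling the Taylor remainder through the oscillation of the Hessian rather than a third derivative.
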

\begin{proof}
\begin{multline*}
\Lambda_{n}\left(\theta^* + \frac{a}{\sqrt{n}}, f^* + \frac{b}{\sqrt{n}}\right)\\
=\sum_{k=1}^{n} \log \left(1+\frac{p_{(\theta^* + \frac{a}{\sqrt{n}},f^*)}-p_{(\theta^*,f^*)}}{p_{(\theta^*,f^*)}}\left(Y_k,t_k \right) +
\frac{1}{\sqrt{n}} \frac{\int g(Y_k -\Psi \left[ S_{\theta^* + \frac{a}{\sqrt{n}}}(t_k)+u,t_k\right])b(u)du}{p_{(\theta^*,f^*)}(Y_k,t_k)}\right)\\
=
W_{n}\left(a,b\right) - \frac{1}{2}\|\left(a,b\right)\|^{2}_{\cal H}+ \opst(1),
\end{multline*}
by using: Taylor expansion till second order of $\log (1+u)$,  Taylor expansion till second order of
$\theta \mapsto p_{(\theta,f^*)}\left(z, t\right)$ and Taylor expansion till first order of
$\theta \mapsto \int g(z-\Psi \left[ S_{\theta}(t)+u,t\right])b(u)du$, which gives the
first order term $W_{n}\left(a,b\right)$, and then applying Lemma \ref{lem:1} to the second order terms
to get  $\frac{1}{2}\|\left(a,b\right)\|^{2}_{\cal H}+ \opst(1)$.\\
The convergence of $(W_{n}(h))_{h\in{\cal H}}$ to the isonormal process on $\cal H$ comes from Lindeberg Theorem
applied to finite dimensional marginals.
\end{proof}

The interest of Proposition \ref{prop:lan} is that it gives indications on the limitations on the estimation of $\theta^*$ when $f^*$ is unknown. Indeed, the efficient Fisher information $I^*$ is given by:
$$
\inf_{b\in{\cal B}} \|\left(a,b\right)\|_{\cal H}^{2}=a^{T} I^* a,
$$
and if $I^*$ is non singular,
any regular estimator $\widehat{\theta}$ that converges at speed $\sqrt{n}$ has asymptotic covariance $\Sigma$
which is lower bounded (in the sense of positive definite matrices) by $(I^*)^{-1}$.\\
In case $I_{R}(\theta^*)$ is non singular and the assumptions in Theorem \ref{theo:ls:clt} hold, one may deduce that
$I^*$ is non singular.

\subsection{Application to BOT}

As seen in Section \ref{subsec:ls:bot}, the set of isotropic densities is a subset of $\cal F$.
If $g$ is twice differentiable, positive and upper bounded, if the trajectory model
$\theta\mapsto S_{\theta}(t)$ is twice differentiable for all $t\in[0,1]$, then Assumptions
\ref{ass:paraclt} and \ref{ass:lan} hold under almost any trajectory of the observer.
Indeed, one may apply Lebesgue's Theorem to obtain derivatives
of integrals, and use the fact that the function $z\mapsto \arctan z$ is infinitely differentiable,  has vanishing derivatives at infinity, is bounded and has two bounded derivatives, so that if the trajectory of the observer is such that, for all $\theta$, the
set of times $t$ and points $u$ such that $\Psi(S_{\theta}(t)+u,t)$ is $-\frac{\pi}{2}$ or $\frac{\pi}{2}$ is negligible, then
the smoothness assumptions hold.\\
Moreover, as seen again in Section \ref{subsec:ls:bot}, if the trajectory model is  (\ref{eq:linmod})
and satisfies Assumption  \ref{ass:obs},
then $I_{R}(\theta^*)$ is non singular, so that the efficient Fisher information $I^*$ is non singular, and
all results of Section \ref{sec:lik} apply.

\section{Further considerations}
\label{sec:dep}

It would be of great interest to have a more explicit general expression of $I^*$, and of greater interest to
exhibit an asymptotically regular and efficient estimator $\widehat{\theta}$. If one could approximate the
profile likelihood $\sup_{f\in{\cal F}} J_{n}(\theta,f)$, one could hope that the maximizer $\widehat{\theta}$
of it be a good candidate.\\
Another possibility would be to use Bayesian estimators. Indeed, in the parametric context, the Bernstein-von Mises
Theorem tells us that asymptotically, the posterior distribution of the parameter is  gaussian, centered
at the maximum likelihood estimator, and with variance the inverse of Fisher information (see \cite{vandervaart:1998} for a nice
presentation). Extensions to semiparametric situations are now available, see \cite{castillo:2008}. To obtain semiparametric
Bernstein-von Mises Theorems, one has to verify assumptions relating the particular model and the choice of the
non parametric prior. This could be the object of further work. Then, with an adequate choice of the prior on
$\Theta\times {\cal F}$, taking advantage of  MCMC computations, one could propose bayesian methods to estimate
$\theta^*$ (mean posterior, maximum posterior, median posterior for example).\\

To extend the results of the preceding sections in the case where the trajectory noise
is no longer a sequence of i.i.d. random variables, one needs to prove laws of large numbers
and central limit theorems for empirical sums such as
$\frac{1}{n}\sum_{k=1}^{n} F\left( \e_{k},t_{k}\right)$, we prove some below for
stationary weakly dependent sequences $(\e_{k})_{k\in\N}$. In such a case, if $M(\theta)$ and $J(\theta, f^*)$
are still the limits of $M_{n}(\theta)$ and $J_n(\theta, f^*)$ respectively, then asymptotics for $\overline{\theta}_n$
and $\tilde{\theta}_n$ could be obtained. Here, $J_n(\theta, f^*)$ is no longer the normalized log-likelihood, rather
the marginal normalized log-likelihood, but $J(\theta, f^*)$ is still a contrast function.\\
Since the convergence of the expectation relies on purely deterministic arguments (Rieman integrability), we focus on
centered functions. We assume in this section that
\begin{assumption}
\label{ass:bruitdep}
$(\e_{k})_{k\in\N}$ is a stationary sequence of
random variables such that for all $t\in [0,1]$
$$
\esp \left[F\left( \e_{1},t\right)\right]=0.
$$
\end{assumption}
Denote by $(\alpha_{k})_{k\in\N}$ the strong mixing coefficients of the sequence $(\e_{k})_{k\in\N}$
defined as in \cite{rio:2000}, that is, for $k\geq 1$,
$$
\alpha_{k} = 2\sup_{\ell\in\N,A \in \sigma(\e_{i}:i\leq \ell),B \in \sigma(\e_{i}:i\geq k+\ell) }
\left\vert\PP\left(A \cap B\right)-\PP\left(A \right)\PP\left(B\right)\right\vert.
$$
and $\alpha_{0}=\frac{1}{2}$.
Notice that they are also an upper bound for the strong mixing coefficients of the sequence $(F\left( \e_{k},t_{k}\right))_{k\in\N}$
for any sequence $(t_{k})_{k\in\N}$ of real numbers in $[0,1]$.
\begin{prop}
\label{prop:1}
Under Assumption \ref{ass:bruitdep}, if $\alpha_{k}$ tends to $0$ as $k\rightarrow +\infty$, if
$\sup_{t\in [0,1]}\esp |F(\e_{1},t)|$ is finite and
$\lim_{M\rightarrow +\infty}\sup_{t\in [0,1]}\esp \{|F(\e_{1},t)|1_{ |F(\e_{1},t)|>M}\}=0$,
then
$$\frac{1}{n}\sum_{k=1}^{n} F\left( \e_{k},t_{k}\right)
$$
converges in probability to $0$ as $n$ tends to infinity.
\end{prop}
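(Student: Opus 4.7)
The plan is to adapt the truncation argument used in Lemma \ref{lem:1}, replacing the i.i.d.\ variance computation by a covariance bound that exploits the strong mixing assumption. For a truncation level $M>0$, I would split
$$
\frac{1}{n}\sum_{k=1}^{n} F\left( \e_{k},t_{k}\right)
= T_{n}(M) + R_{n}(M),
$$
with
$$
T_{n}(M)=\frac{1}{n}\sum_{k=1}^{n}\left[ F(\e_{k},t_{k})1_{|F(\e_{k},t_{k})|\leq M} - \esp\{F(\e_{1},t_{k})1_{|F(\e_{1},t_{k})|\leq M}\}\right],
$$
and $R_{n}(M)$ collecting the unbounded part together with the compensating expectations. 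Since $\esp[F(\e_{1},t)]=0$ by Assumption~\ref{ass:bruitdep}, the compensating term equals $-\esp\{F(\e_{1},t_{k})1_{|F(\e_{1},t_{k})|>M}\}$, so $\esp|R_{n}(M)|\leq 2\sup_{t\in[0,1]}\esp\{|F(\e_{1},t)|1_{|F(\e_{1},t)|>M}\}$, which can be made arbitrarily small by choosing $M$ large, exactly by the uniform integrability hypothesis.

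For $T_{n}(M)$ with $M$ now fixed, set $X_{k}=F(\e_{k},t_{k})1_{|F(\e_{k},t_{k})|\leq M}-\esp\{\cdots\}$, so that $|X_{k}|\leq 2M$, $\esp X_{k}=0$, and the strong mixing coefficients of $(X_{k})$ are bounded above by $(\alpha_{k})$ as remarked just before the statement. The key ingredient is Davydov's covariance inequality for bounded variables (see e.g.\ \cite{rio:2000}), which gives $|\Cov(X_{i},X_{j})|\leq 4(2M)^{2}\alpha_{|i-j|}$. Hence
$$
\Var\left( T_{n}(M)\right)\leq \frac{(2M)^{2}}{n} + \frac{2}{n^{2}}\sum_{1\leq i<j\leq n} 16 M^{2}\alpha_{j-i} \leq \frac{4M^{2}}{n}+\frac{32 M^{2}}{n}\sum_{k=1}^{n-1}\alpha_{k}.
$$
Since $\alpha_{k}\rightarrow 0$, Ces\`aro's lemma yields $\frac{1}{n}\sum_{k=1}^{n-1}\alpha_{k}\rightarrow 0$, and so $\Var(T_{n}(M))\rightarrow 0$. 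By Chebyshev's inequality, $T_{n}(M)\rightarrow 0$ in probability for every fixed $M$.

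Combining the two pieces in the usual order: given $\delta>0$, first choose $M$ so that $\esp|R_{n}(M)|\leq \delta^{2}/2$ uniformly in $n$, so that $\PP(|R_{n}(M)|>\delta/2)\leq \delta/2$ by Markov's inequality; then choose $n$ large enough that $\PP(|T_{n}(M)|>\delta/2)\leq \delta/2$. This proves the claimed convergence in probability. The only nontrivial ingredient is the covariance inequality for bounded strongly mixing sequences, which is standard; the rest is a direct transcription of the truncation scheme already deployed in Lemma~\ref{lem:1}, with the $i.i.d.$ variance bound $2M^{2}/n$ replaced by a $\sum_{k}\alpha_{k}$ bound whose convergence rests on Ces\`aro averaging.
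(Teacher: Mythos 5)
Your proof is correct and follows essentially the same route as the paper's: truncate at level $M$, control the variance of the bounded centered part via a covariance inequality for bounded strongly mixing variables (the paper cites Ibragimov's inequality, you cite Davydov's bounded-variable version — the same tool up to constants) together with Ces\`aro averaging of the $\alpha_k$, and absorb the tails uniformly in $n$ using the uniform integrability hypothesis, exactly as in Lemma~\ref{lem:1}. The only cosmetic difference is that you keep the diagonal term separate while the paper folds it into the sum via $\alpha_0=\tfrac12$; nothing substantive changes.
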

\begin{proof}
\\
Using Ibragimov's inequality (\cite{Ibragimov:1962}), for any $M$:
\begin{eqnarray*}
\Var\left(\frac{1}{n}\sum_{k=1}^{n} F\left( \e_{k},t_{k}\right)1_{ |F(\e_{k},t_{k})|\leq M}\right)
&=&\frac{1}{n^{2}}\sum_{i=1}^{n}\sum_{j=1}^{n}\Cov\left(F\left( \e_{i},t_{i}\right)1_{ |F(\e_{i},t_{i})|\leq M}
;F\left( \e_{j},t_{i}\right)1_{ |F(\e_{i},t_{i})|\leq M}\right)
\\
&\leq &\frac{2 M^{2}}{n^{2}}\sum_{i=1}^{n}\sum_{j=1}^{n}\alpha_{|i-j|}\\
&\leq & \frac{2 M^{2}}{n }\sum_{k=0}^{n-1}\alpha_{k}
\end{eqnarray*}
which tends to $0$ by Cesaro as $n\rightarrow +\infty$.\\
The end of the proof is similar to that of Lemma \ref{lem:1}.
\end{proof}

Define now
$$
\alpha^{-1}\left(u\right)=\inf\left\{k\in\N;:\;\alpha_{k}\leq u\right\}
=\sum_{i\geq 0}1_{u<\alpha_{i}}.
$$
Define also for any $t\in [0,1]$,
$$
Q_{t}\left(u\right)=\inf\left\{x\in\R;:\;\PP\left(\vert F(\e_{1},t)\vert>x\right)\leq u\right\},
$$
and
$$
Q\left(u\right)=\sup_{t\in [0,1]}Q_{t}\left(u\right).
$$
We shall assume that
\begin{assumption}
\label{ass:depgene}
$$
\int_{0}^{1} \alpha^{-1}\left(u\right)Q^{2}\left(u\right)du < +\infty,
$$
\end{assumption}
which is the same as the convergence of the series
$$
\sum_{k\geq 0}\int_{0}^{\alpha_{k}}Q^{2}\left(u\right)du .
$$
Applying Theorem 1.1 in \cite{rio:2000} one gets for any $t\in[0,1]$ and $k\geq 0$:
$$
\left\vert\Cov\left(F\left( \e_{0},t\right);F\left( \e_{k},t\right)\right)\right\vert
\leq 2 \int_{0}^{\alpha_{k}}Q^{2}\left(u\right)du,
$$
so that if Assumption \ref{ass:depgene} holds, one may define
\begin{equation}
\label{sigma2}
\gamma^{2}=\int_{0}^{1}\Var F\left( \e_{0},t\right)dt + 2 \sum_{k=1}^{+\infty}
\int_{0}^{1}\Cov\left(F\left( \e_{0},t\right);F\left( \e_{k},t\right)\right)dt.
\end{equation}
Now:
\begin{prop}
\label{prop:2}
Under Assumptions \ref{ass:bruitdep} and \ref{ass:depgene}, if $\sigma^{2} >0$ and if
for any integer $k$, the real function
$(t,u)\rightarrow \Cov\left(F\left( \e_{0},t\right);F\left( \e_{k},u\right)\right)$
is continuous on $[0,1]^{2}$, then
$$
\frac{1}{\sqrt{n}}\sum_{k=1}^{n} F\left( \e_{k},t_{k}\right)
$$
converges in distribution to ${\cal N}(0,\gamma^{2})$ as $n$ tends to infinity.
\end{prop}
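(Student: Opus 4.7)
The plan is to combine Rio's covariance bound with a Bernstein big-block/small-block decomposition adapted to the triangular array $(F(\e_k,t_k))_{1\le k\le n}$, whose only non-stationarity comes from the deterministic times $t_k=k/n$.

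First, I would identify the limit variance. By stationarity of $(\e_k)$,
$$
\Var\!\left(\frac{1}{\sqrt n}\sum_{k=1}^{n} F(\e_k,t_k)\right)
= \frac{1}{n}\sum_{k=1}^{n}\Var F(\e_0,t_k)
+ \frac{2}{n}\sum_{1\le i<j\le n}\Cov(F(\e_0,t_i),F(\e_{j-i},t_j)).
$$
For each fixed lag $\ell$, the hypothesized continuity of $(t,u)\mapsto\Cov(F(\e_0,t),F(\e_\ell,u))$ makes the inner sum a Riemann sum converging, as $n\to\infty$ and $t_{i+\ell}-t_i=\ell/n\to 0$, to $\int_0^1\Cov(F(\e_0,t),F(\e_\ell,t))\,dt$. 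Rio's inequality (Theorem 1.1 in \cite{rio:2000}), used uniformly in $(t,u)$, provides the bound $|\Cov(F(\e_0,t),F(\e_\ell,u))|\le 2\int_0^{\alpha_\ell}Q^2(u)\,du$, so Assumption \ref{ass:depgene} allows dominated convergence in the sum over lags, and the limit is exactly $\gamma^2$.

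Second, I would set up a Bernstein decomposition. Pick integer sequences $p_n,q_n\to\infty$ with $q_n/p_n\to 0$, $p_n/n\to 0$ and $n\alpha_{q_n}/p_n\to 0$, which is possible since $\alpha_k\to 0$. Partition $\{1,\ldots,n\}$ into alternating large blocks of size $p_n$ and small blocks of size $q_n$, and denote by $U_j$ and $V_j$ the sums of $F(\e_k,t_k)$ over the $j$-th large and small block respectively, for $j=1,\ldots,N=\lfloor n/(p_n+q_n)\rfloor$. Applying the variance estimate of step one to each small block (and to the leftover indices), the total contribution satisfies $\Var(n^{-1/2}\sum_j V_j)=O(Nq_n/n)\to 0$, so by Chebyshev the small blocks and remainder are negligible.

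Third, I would approximate $n^{-1/2}\sum_j U_j$ by a sum of independent copies. The classical Volkonskii--Rozanov characteristic-function bound for strongly mixing sequences (Lemma 1.1 in \cite{rio:2000}) yields
$$
\left|\esp\exp\!\left(it\,\tfrac{1}{\sqrt n}\sum_{j=1}^{N}U_j\right)-\prod_{j=1}^{N}\esp\exp\!\left(it\,\tfrac{1}{\sqrt n}U_j\right)\right|\le 4(N-1)\alpha_{q_n},
$$
which vanishes by the choice of $p_n,q_n$. Hence it suffices to prove a Gaussian limit for the independent surrogate, which I would do via the Lindeberg--Feller theorem: the variance of the surrogate equals $n^{-1}\sum_j \Var U_j$, which by step one converges to $\gamma^2$, while the Lindeberg condition follows by truncating at level $\delta\sqrt{n}$ and controlling $\esp(U_j^2\mathbf{1}_{|U_j|>\delta\sqrt n})$ via the quantile function $Q$ (Rio's truncation-by-quantile device), using Assumption \ref{ass:depgene} and $p_n/n\to 0$. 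Slutsky's lemma then combines the vanishing small-block contribution with the Gaussian limit of the large-block sum.

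The main obstacle I anticipate is the Lindeberg verification: one must control the tails of each block sum $U_j$ (which involves $p_n$ mixing terms) uniformly in $j$, and this requires a genuine use of the quantile-transform machinery of \cite{rio:2000} rather than crude moment bounds, together with a careful tuning of $p_n$ and $q_n$ balancing the mixing rate against the quantile integrability.
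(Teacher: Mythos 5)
Your first step — the identification of the limit variance — is exactly the paper's argument: rewrite $\Var(S_n)/n$ as a sum over lags, use Rio's covariance inequality (Theorem 1.1 in \cite{rio:2000}) together with Assumption \ref{ass:depgene} to make the tail of the lag sum uniformly small, and use the continuity hypothesis to turn each fixed-lag sum into a Riemann sum converging to $\int_0^1\Cov(F(\e_0,t),F(\e_\ell,t))\,dt$. Where you diverge is the CLT step: the paper simply invokes Corollary 1 of \cite{rio:1995}, which is a triangular-array central limit theorem for strongly mixing sequences proved by the Lindeberg method under precisely the condition $\int_0^1\alpha^{-1}(u)Q^2(u)\,du<\infty$, whereas you attempt to reprove it from scratch by a Bernstein big-block/small-block decomposition with the Volkonskii--Rozanov inequality.

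That substitution leaves a genuine gap at the step you yourself flag as the main obstacle. Under Assumption \ref{ass:depgene} the summands have, in effect, only (weighted) second moments, so the Lindeberg condition $\frac{1}{n}\sum_j\esp\bigl(U_j^2\,\mathbf{1}_{|U_j|>\delta\sqrt n}\bigr)\to 0$ for the dependent block sums $U_j$ does not follow from $p_n/n\to 0$ by any crude truncation: it amounts to the uniform integrability of $\{S_p^2/p\}_{p\ge 1}$ for the mixing sequence, which is itself a nontrivial lemma (it is part of the Doukhan--Massart--Rio tightness argument and is established in \cite{rio:2000} by the quantile-coupling machinery you allude to). The classical blocking proof of the mixing CLT is ordinarily carried out under stronger trade-offs such as Ibragimov's $2+\delta$ moment condition, where the Lindeberg verification is routine; under the sharp $L^2$-type condition here it is exactly the hard part, and your sketch gestures at it without closing it. The argument is fillable — either by importing the uniform-integrability lemma explicitly, or, as the paper does, by citing the ready-made Lindeberg-method CLT of \cite{rio:1995} once the variance convergence is in hand — but as written the proof is incomplete at its most critical step.
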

\begin{proof}
\\
Let
$
S_{n}=\sum_{k=1}^{n} F\left( \e_{k},t_{k}\right).
$
First of all, let us prove that $\frac{\Var S_{n}}{n}$ converges to $\sigma^{2}$
as $n$ tends to infinity.
\begin{eqnarray*}
\frac{\Var S_{n}}{n}&=&\frac{1}{n}\sum_{i=1}^{n}\sum_{j=1}^{n}
\Cov\left(F\left( \e_{i},t_{i}\right);F\left( \e_{j},t_{j}\right)\right)\\
&=&\frac{1}{n}\sum_{k=1-n}^{n-1}\sum_{i=1\vee (1-k)}^{n\wedge (n-k)}
\Cov\left(F\left( \e_{0},t_{i}\right);F\left( \e_{k},t_{i+k}\right)\right).
\end{eqnarray*}
For any $K\geq 1$, using again Theorem 1.1 in \cite{rio:2000}
$$
\left\vert\frac{1}{n}\sum_{K\leq |k|\leq n-1}\sum_{i=1\vee (1-k)}^{n\wedge (n-k)}
\Cov\left(F\left( \e_{0},t_{i}\right);F\left( \e_{k},t_{i+k}\right)\right)\right\vert
\leq 2 \sum_{k\geq K}\int_{0}^{\alpha_{k}}Q^{2}\left(u\right)du
$$
which is smaller than any positive $\epsilon$ for big enough $K$ under Assumption \ref{ass:depgene}.\\
Now, for any fixed integer $k$,
\begin{eqnarray*}
&&\left\vert\frac{1}{n} \sum_{i=1\vee (1-k)}^{n\wedge (n-k)}
\Cov\left(F\left( \e_{0},t_{i}\right);F\left( \e_{k},t_{i+k}\right)\right)
-\int_{0}^{1}\Cov\left(F\left( \e_{0},t\right);F\left( \e_{k},t\right)\right)dt\right\vert\\
&\leq &
\sup_{t,u\in[0,1],|t-u|\leq \frac{k}{n}}\left\vert \Cov\left(F\left( \e_{0},t\right);F\left( \e_{k},t+u\right)\right)-\Cov\left(F\left( \e_{0},t\right);F\left( \e_{k},t\right)\right)\right\vert\\
&&+ \frac{k}{n}\sup_{t\in[0,1]}\left\vert \Cov\left(F\left( \e_{0},t\right);F\left( \e_{k},t\right)\right)\right\vert
\end{eqnarray*}
which goes to $0$ as $n$ tends to infinity under the continuity assumption. The convergence of $\frac{\Var S_{n}}{n}$
to $\sigma^{2}$ follows.\\
The end of the proof of Proposition \ref{prop:2} is a direct application of Corollary 1 in \cite{rio:1995}.
\end{proof}

\section{Simulations}
\label{sec:simu}

The simulations have been realized using \texttt{Matlab}. The minimisation is made with the function \texttt{searchmin} by setting to  $2000$ the options \texttt{MaxFunEvals} and \texttt{MaxIter}, so that the method reaches the minimum.

For all the simulations, the observation time is of $20 \s$. The trajectory of the observer has a speed  with constant norm $\displaystyle \left\| \frac{\dd O(t)}{\dd t}\right\|$equal to $0.25 \kms$ and makes maneuvers with norm of acceleration $\displaystyle \left \| \frac{\dd^2 O(t)}{\dd t^2} \right\|$ of approximatively $50 \mss$. The trajectory is mainly composed of uniform linear motions and circular uniform motions. The different sequences of the trajectory of the platform are described in the following table. The null values of acceleration correspond to uniform linear motions and the others to uniform circular motion.

\begin{equation*}
    \begin{array}{l|c|c|c|c}
       \textrm{time interval (s)} & 0-6 & 7-10 & 11-14 & 15-20 \\
       \hline
       \textrm{norm of acceleration} \mathrm{(m/s^2)} & 50 & 0 & -55 & 0
    \end{array}
\end{equation*}

The positive and negative values for norm of acceleration correspond respectively to anticlockwise and clockwise circular motion. The transition sequences between circular motion and linear motion which are the time intervals  $[6,7]$, $[10,11]$, and $[14,15]$ are such that the whole trajectory is ${\cal C}^\infty$.

The assumed parametric model is a uniform linear motion with a speed of $0.27 \kms$. The parameter $\theta$ is defined by
\begin{equation*}
    \theta = \begin{pmatrix} x_0& y_0& v_x& v_y \end{pmatrix}^T \;.
\end{equation*}
where $(x_0,y_0)$ denotes the initial position and $(v_x,v_y)$ the speed vector. The parametric trajectory is then defined by
\begin{equation*}
    X_\theta(t) = \begin{pmatrix} x_0 + v_x t \\ y_0 + v_y t \end{pmatrix} \;.
\end{equation*}
The observation noise is a sequence of i.i.d centered Gaussian variables with variance $\sigma = 10^{-3} \rad$. The platform receives $2000$ observations.

For the first simulation, we consider a  sequence $(\varepsilon_k )_{k\in\N}$ of i.i.d Gaussian centered random variables with variance $\sigma_X^2 \times I_2$ and $\sigma_X = 10 \m$. The figure \ref{fig:BLSE_isotropic_iid_siml} shows the trajectory of the platform with a realization of a trajectory of the target and the parametric trajectory with parameter $\overline{\theta}_n$ and also the confidence area with level of $95 \%$ for the position at final time. The figure \ref{fig:MLE_isotropic_iid_siml} presents the same for the maximum likelihood estimator (MLE) $\tilde{\theta}_n$.

By using Monte-Carlo methods with $1000$ experiments, histograms of the coordinates of $\sqrt{n}(\overline{\theta}_n-\theta^*)$ are presented on figure \ref{fig:BLSE_isotropic_iid_histo} with the marginal probability densities of the asymptotic law ${\cal N}(0,I_M^{-1}(\theta^*))$ in dotted line. The empirical cumulative distribution functions of the coordinates of $\sqrt{n}(\overline{\theta}_n-\theta^*)$ are presented on figure \ref{fig:BLSE_isotropic_iid_cdf} juxtaposed to the marginal cumulative distributions of law ${\cal N}(0,I^{-1}_M(\theta^*))$. These two figures illustrate the convergence in distribution given by Theorem \ref{theo:ls:clt}, since the sequence $(\varepsilon_k)_{k \in \N}$ is an i.i.d. sequence of isotropic random variables.

\begin{figure}[H]
\centering
      \includegraphics[width=10cm]{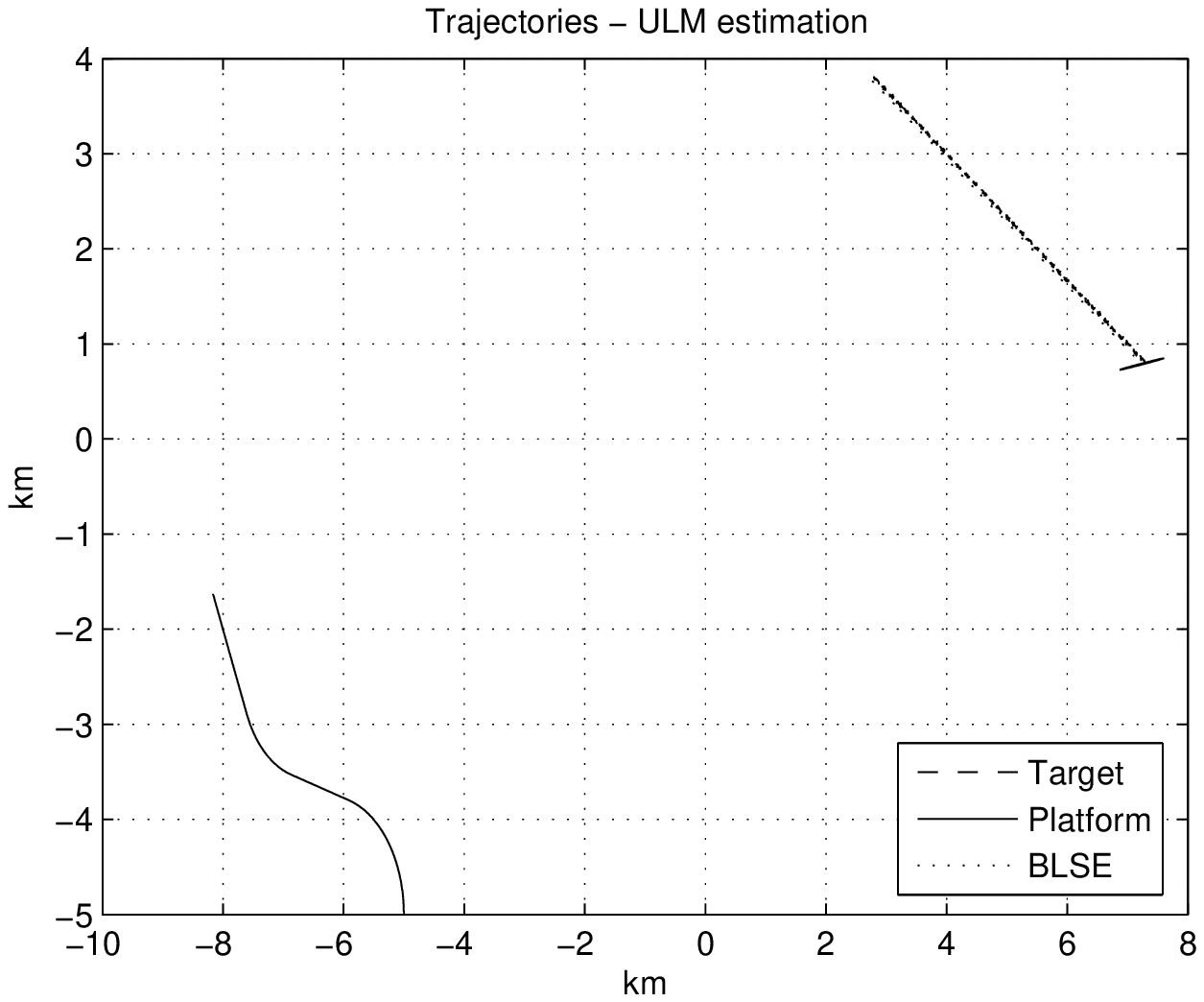}
      \caption{Trajectories with confidence area for BLSE at final position}
      \label{fig:BLSE_isotropic_iid_siml}
\end{figure}

\begin{figure}[H]
   \begin{minipage}[]{.46\linewidth}
      \includegraphics[width=8cm]{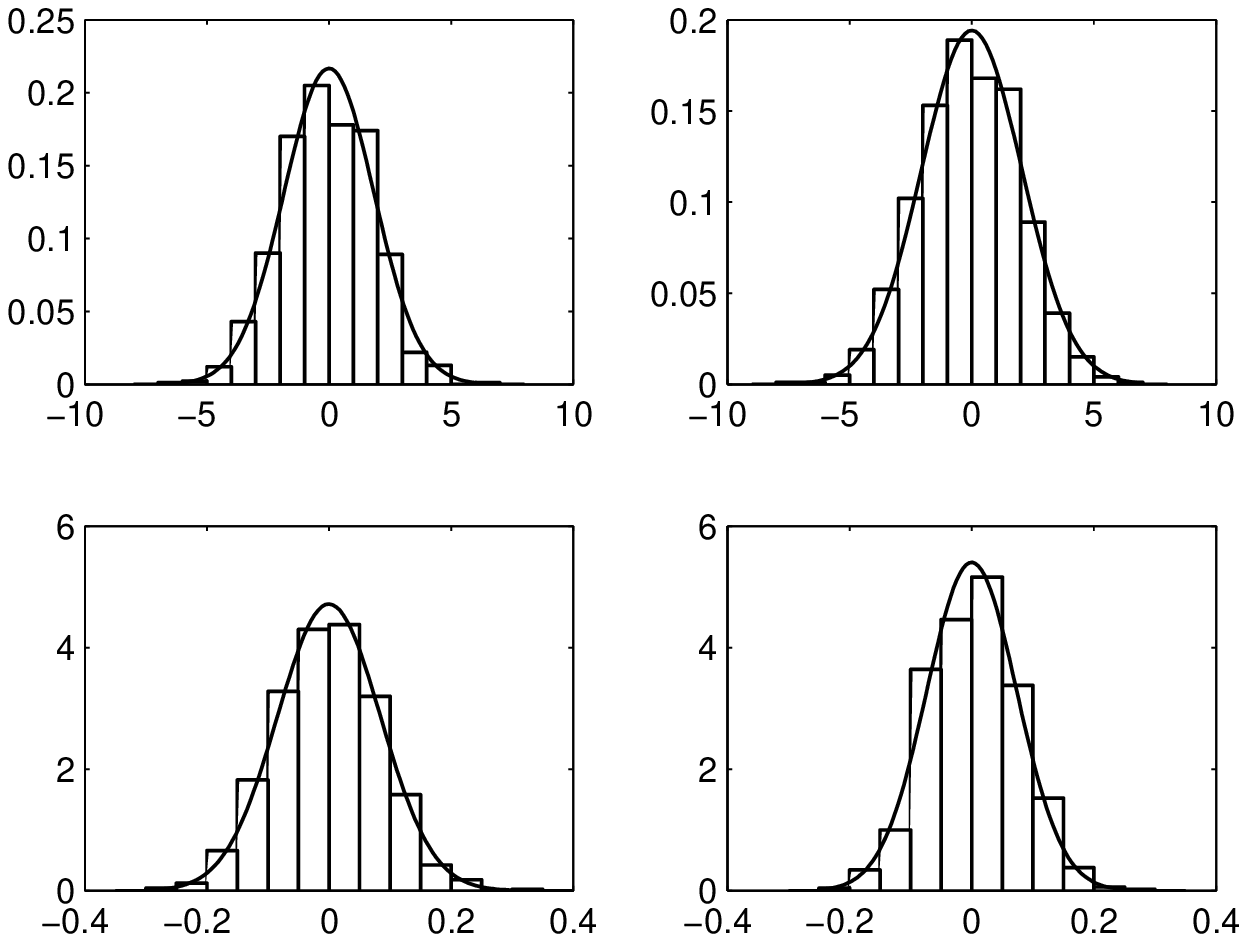}
      \caption{Histograms for BLSE with iid Gaussian isotropic sequence}
      \label{fig:BLSE_isotropic_iid_histo}
   \end{minipage} \hfill
   \begin{minipage}[c]{.46\linewidth}
      \includegraphics[width=8cm]{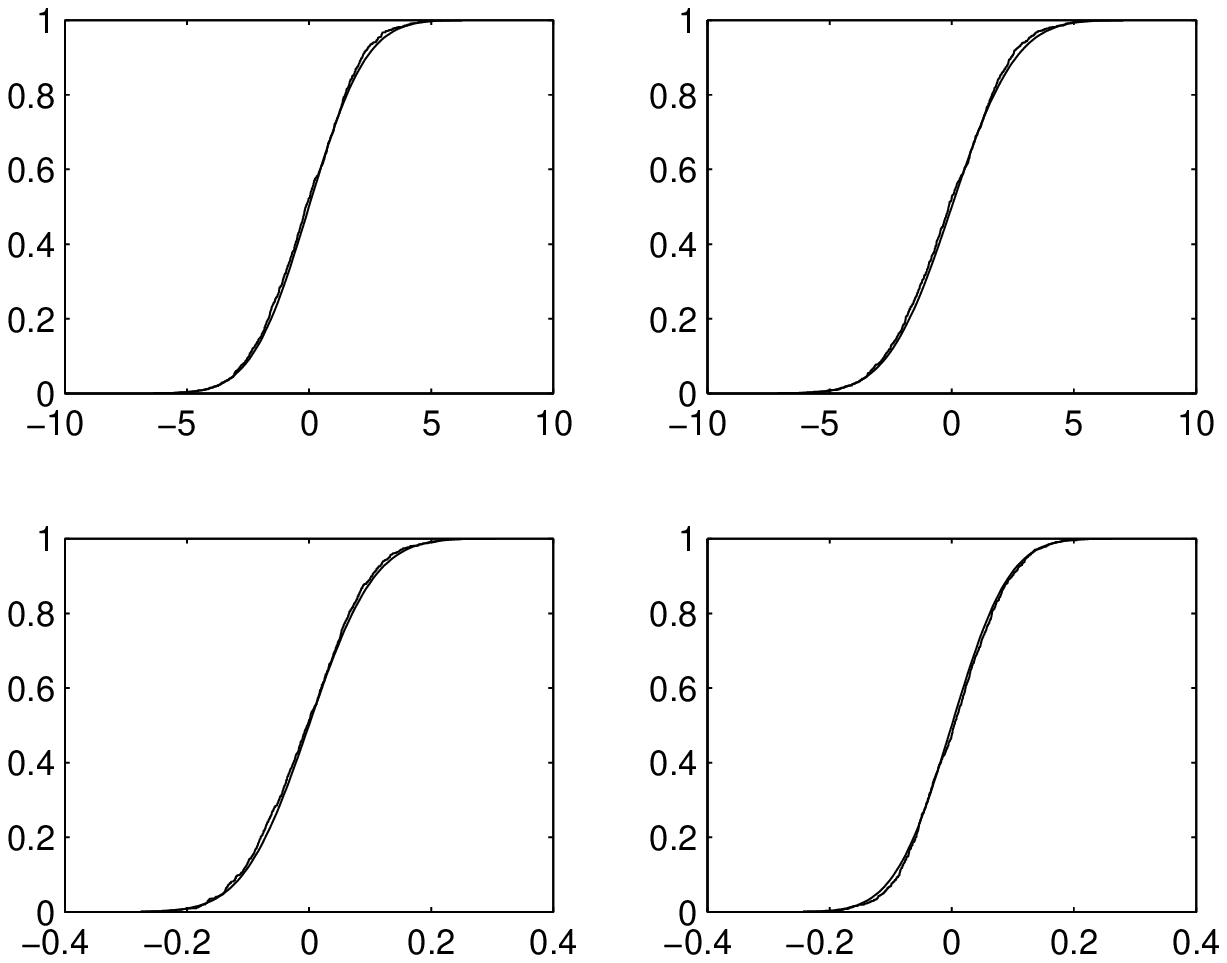}
      \caption{Cumulative distribution functions for BLSE with iid Gaussian isotropic sequence}
      \label{fig:BLSE_isotropic_iid_cdf}
   \end{minipage}
\end{figure}

The figure \ref{fig:MLE_isotropic_iid_histo} present the histograms of the coordinates of $\sqrt{n}(\tilde{\theta}_n-\theta^*)$ with the marginal probability densities of the asymptotic law ${\cal N}(0,I^{-1}(\theta^*))$ in dotted line. Empirical cumulative distribution functions of the coordinates of $\sqrt{n}(\tilde{\theta}_n-\theta^*)$ and marginal cumulative distributions of law ${\cal N}(0,I^{-1}(\theta^*))$ are presented on figure \ref{fig:MLE_isotropic_iid_cdf}. These two figures illustrate the convergence in distribution given by Theorem \ref{theo:lrtpara}.

\begin{figure}[H]
\centering
      \includegraphics[width=10cm]{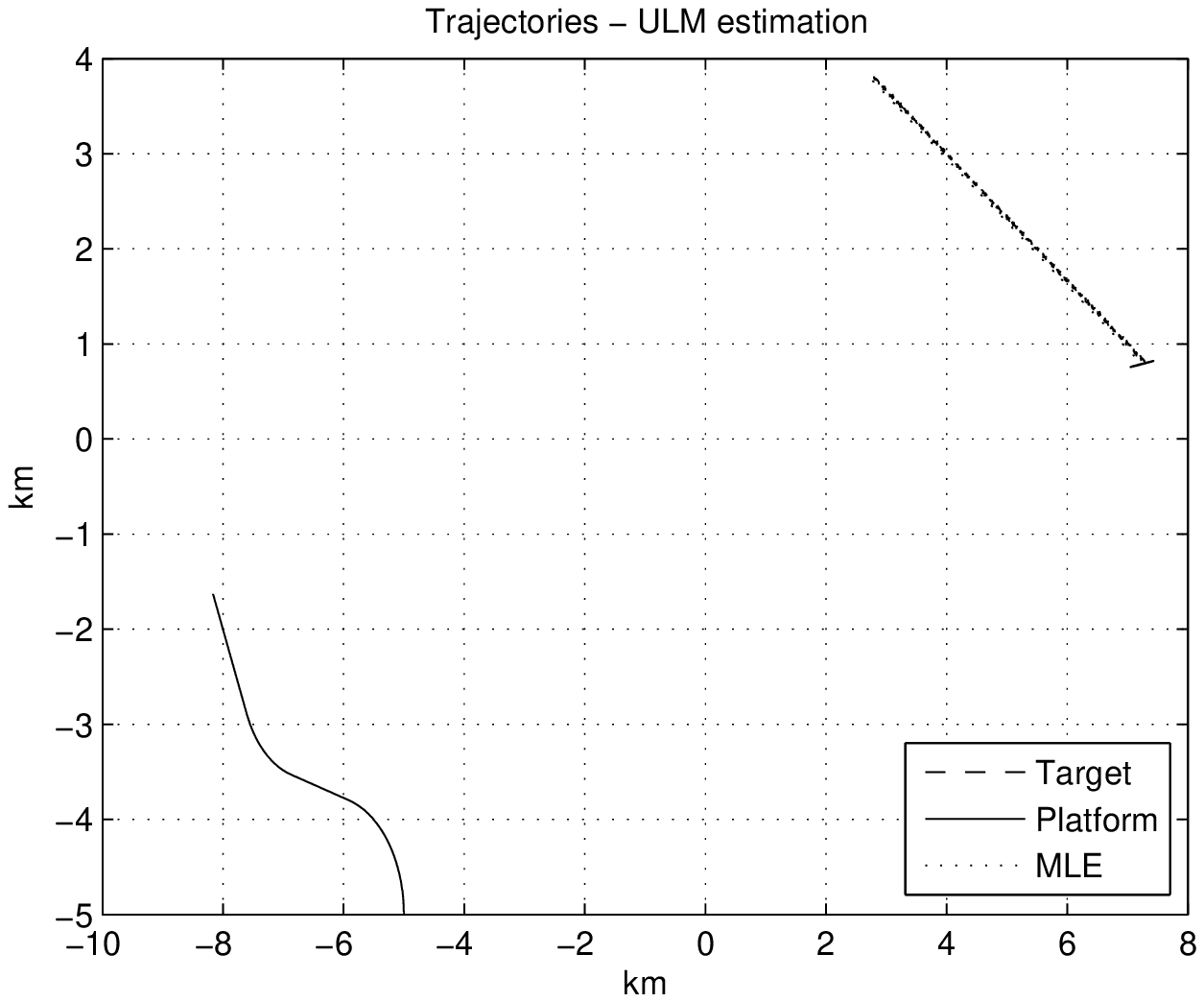}
      \caption{Trajectories with confidence area for MLE at final position}
      \label{fig:MLE_isotropic_iid_siml}
\end{figure}

\begin{figure}[H]
   \begin{minipage}[]{.46\linewidth}
      \includegraphics[width=8cm]{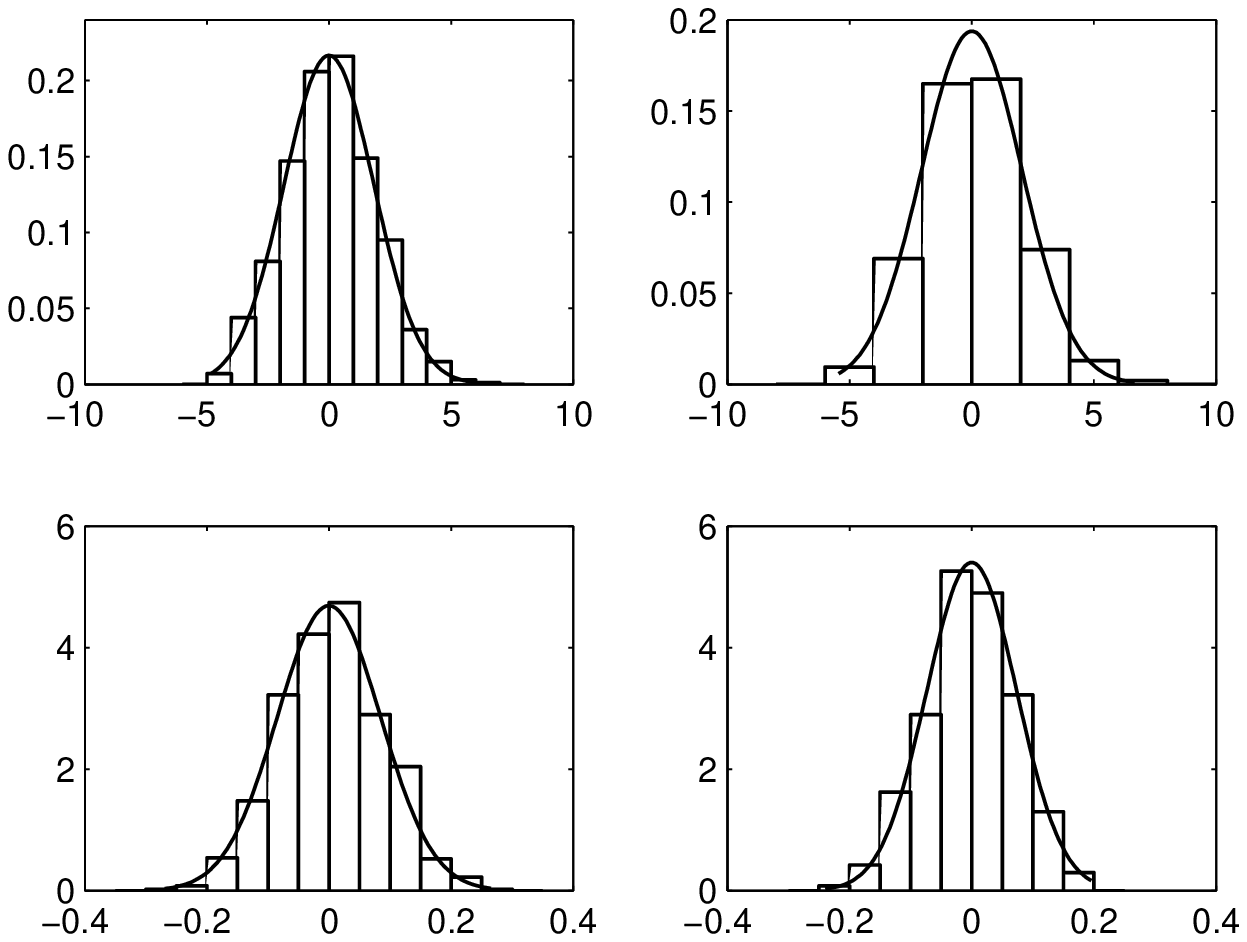}
      \caption{Histograms for MLE with iid Gaussian isotropic sequence}
      \label{fig:MLE_isotropic_iid_histo}
   \end{minipage} \hfill
   \begin{minipage}[c]{.46\linewidth}
      \includegraphics[width=8cm]{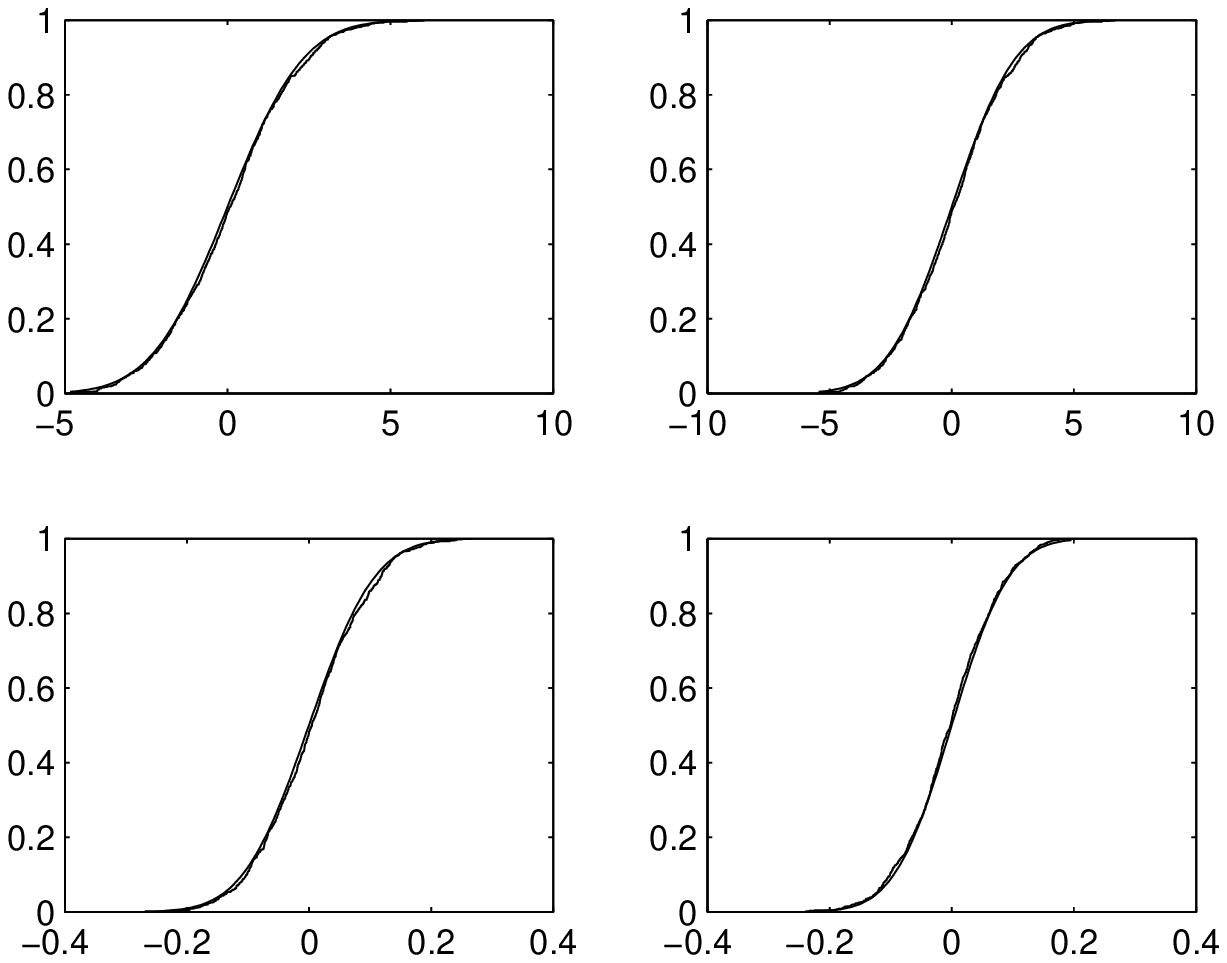}
      \caption{Cumulative distribution functions for MLE with iid Gaussian isotropic sequence}
      \label{fig:MLE_isotropic_iid_cdf}
   \end{minipage}
\end{figure}

Confidence intervals for coordinates of $\theta^*$ with level of $95 \%$ are detailed in table \ref{tab:BLSE_iid_isotropic} for $\overline{\theta}_n$ and in table \ref{tab:MLE_iid_isotropic} for $\tilde{\theta}_n$ and are respectively denoted by ${\rm IC_1}(\overline{\theta}_n)$ and ${\rm IC_3}(\tilde{\theta}_n)$.
We also present in table \ref{tab:CONS_iid_isotropic}  conservative confidence intervals denoted by ${\rm IC_2}(\overline{\theta}_n)$ built on the result provided by
Theorem \ref{theo:BOT:1} with $R_{\min} = 6 \km$. The choice of $\sigma_X$ and $R_{\min}$  is a prior knowledge on the experiment and is made according to the knowledge of the tactical situation of BOT. Note that the majoration obtained in \eqref{eq:conservative_majoration} shows that the accuracy of the conservative confidence intervals is proportional to the ratio $\frac{\esp\| \e_1 \|^2}{R_{\min}^2}$. This result is very interesting in practice since it shows that
for high values of relative distance between target and observer and small values of state noise variance, conservative confidence intervals are of high accuracy.

For these simulations, one needs to calculate $I_\Psi(\overline{\theta}_n)$, $I_\Psi(\theta^*)$, $I(\tilde{\theta}_n)$ and $I(\theta^*)$ which involve expectations of functions of the r.v. $\varepsilon_1$ with law ${\cal N}(O,\sigma_X^2 \times I_2)$. All integrals of this type has been calculated using quadrature formula with 12 points. Abscissas and weight factors are given in \cite{abramowitz:stegun:1964}. Let us detail the numerical values of $I_\Psi(\bar{\theta}_n)$ and $\sigma^2 \times I_R(\overline{\theta}_n)$ for one experiment used to build the estimators $\overline{\theta_n}$ and $\tilde{\theta}_n$. These numerical values illustrate that the contributions of state noise and observation noise are of the same level.
\begin{equation*}
    I_\Psi(\overline{\theta}_n) = 10^{-6} \times \left(
    \begin{array}{rrrr}
    0.0010 &  -0.0014 &   0.0049  & -0.0094\\
   -0.0014 &   0.0024  & -0.0094  &  0.0220\\
    0.0049  & -0.0094 &   0.0400 &  -0.0950\\
   -0.0094  &  0.0220  & -0.0950 &   0.2709
    \end{array}\right) \;,
\end{equation*}
\begin{equation*}
    \sigma^2 \times I_R(\overline{\theta}_n) = 10^{-6} \times \left(
    \begin{array}{rrrr}
    0.0015  & -0.0023  &  0.0082  & -0.0169\\
   -0.0023  &  0.0043  & -0.0169 &   0.0428\\
    0.0082  & -0.0169 &   0.0728  & -0.1853\\
   -0.0169  &  0.0428  & -0.1853 &   0.5639
    \end{array}\right) \;.
\end{equation*}
Let us now precise the values of variance matrices. We have
\begin{equation*}
    I_M^{-1}(\overline{\theta}_n) = \left(
    \begin{array}{rrrr}
    3.4917 &   3.8949 &   0.1560 &  -0.1399\\
    3.8949 &   4.3496 &   0.1752 &  -0.1561\\
    0.1560  &  0.1752 &   0.0074 &  -0.0062\\
   -0.1399 &  -0.1561 &  -0.0062 &   0.0056
    \end{array}\right) \;,
\end{equation*}
and
\begin{equation*}
    I^{-1}(\tilde{\theta}_n)=
    \left(
    \begin{array}{rrrr}
    3.3918  &  3.7884 &   0.1526 &  -0.1359\\
    3.7884  &  4.2362 &   0.1715 &  -0.1518\\
    0.1526  &  0.1715 &   0.0072 &  -0.0061\\
   -0.1359  & -0.1518  & -0.0061 &   0.0055
    \end{array}\right) \;.
\end{equation*}
The true parameter $\theta^*$ is
\begin{equation*}
    \theta^* = \begin{pmatrix} 2.8& 3.8& 0.225& -0.15 \end{pmatrix}^T\;,
\end{equation*}
and values of estimators $\overline{\theta}_n$ and $\tilde{\theta}_n$, used to calculate variance matrices, are
\begin{eqnarray*}
      \overline{\theta}_n &=& \begin{pmatrix}   2.8753 &   3.8841  &  0.2284 &  -0.1530 \end{pmatrix}^T\;, \\
      \tilde{\theta}_n &=& \begin{pmatrix}   2.8067 & 3.8077 & 0.2253 & -0.1502
      \end{pmatrix}^T\;,
\end{eqnarray*}
with $x_0,\ y_0$ given in $\km$ and $v_x,\ v_y$ given in $\kms$ and the position at final time is $(7.3,0.8)$. 
\begin{table}[H]
   \begin{minipage}[]{.46\linewidth}
\begin{tabular}{cc|c}
    \multicolumn{2}{c|}{ $\mathrm{IC}_1(\bar{\theta}_{n,i}$)} & \multicolumn{1}{c}{$| \mathrm{IC}_1(\bar{\theta}_{n,i}$)$|$}\\
  \hline
    7.3128  &  7.5747 &   0.2619\\
    0.8017  &  0.8456 &   0.0439\\
    0.2253  &  0.2316  &  0.0063\\
   -0.1558  & -0.1503 &   0.0055\\
\end{tabular}
\caption{Confidence intervals for BLSE at level 95\%}
\label{tab:BLSE_iid_isotropic}
  \end{minipage} \hfill
   \begin{minipage}[c]{.46\linewidth}
\begin{tabular}{cc|c}
    \multicolumn{2}{c|}{ $\mathrm{IC}_2(\bar{\theta}_{n,i}$)} & \multicolumn{1}{c}{$|\mathrm{IC}_2(\bar{\theta}_{n,i}$)$|$}\\
  \hline
    6.0645 &   8.8230 &   2.7586\\
    0.5917  &  1.0557   & 0.4640\\
    0.1949   & 0.2619  &  0.0669\\
   -0.1818  & -0.1242  &  0.0576\\
\end{tabular}
\caption{Conservative confidence intervals for BLSE at level 95\%}
\label{tab:CONS_iid_isotropic}
   \end{minipage} 
  \center{
\begin{tabular}{cc|c}
    \multicolumn{2}{c|}{ $\mathrm{IC}_3(\tilde{\theta}_{n,i}$)} & \multicolumn{1}{c}{$|$ $\mathrm{IC}_3(\tilde{\theta}_{n,i}$)$|$}\\
  \hline
    7.1842   & 7.4430  &  0.2588\\
    0.7815   & 0.8249  &  0.0434\\
    0.2222   & 0.2285  &  0.0063\\
   -0.1529  & -0.1475  &  0.0054\\
\end{tabular}}
\caption{Confidence intervals for MLE at level 95\%}
\label{tab:MLE_iid_isotropic}
\end{table}
It appears that the maximum likelihood estimator $\tilde{\theta}_n$ is a bit more accurate  than $\overline{\theta}_n$. It is not surprising since the MLE is designed specifically for the model, and takes into account the state noise. Nevertheless, because of the high calculation cost for the MLE, the BLSE is in practice a very useful alternative.

For the second simulation, we consider the case of a sequence $(\varepsilon_k )_{k\in\N}$ of i.i.d Gaussian centered random variables with variance $\sigma_X^2 \times \begin{pmatrix}6^2 & 0 \\ 0 & 1 \end{pmatrix} $ and $\sigma_X = 10 \m$.
It seems that the results given by Theorems \ref{theo:ls:clt} and \ref{theo:lrtpara} still hold,
even though the  sequence $(\varepsilon_k )_{k\in\N}$ does not have an isotropic
distribution,
see Figures \ref{fig:BLSE_nonisotropic_iid_histo}, \ref{fig:BLSE_nonisotropic_iid_cdf}, \ref{fig:MLE_nonisotropic_iid_histo} and \ref{fig:MLE_nonisotropic_iid_cdf}.
\begin{figure}[H]
   \begin{minipage}[]{.46\linewidth}
      \includegraphics[width=8cm]{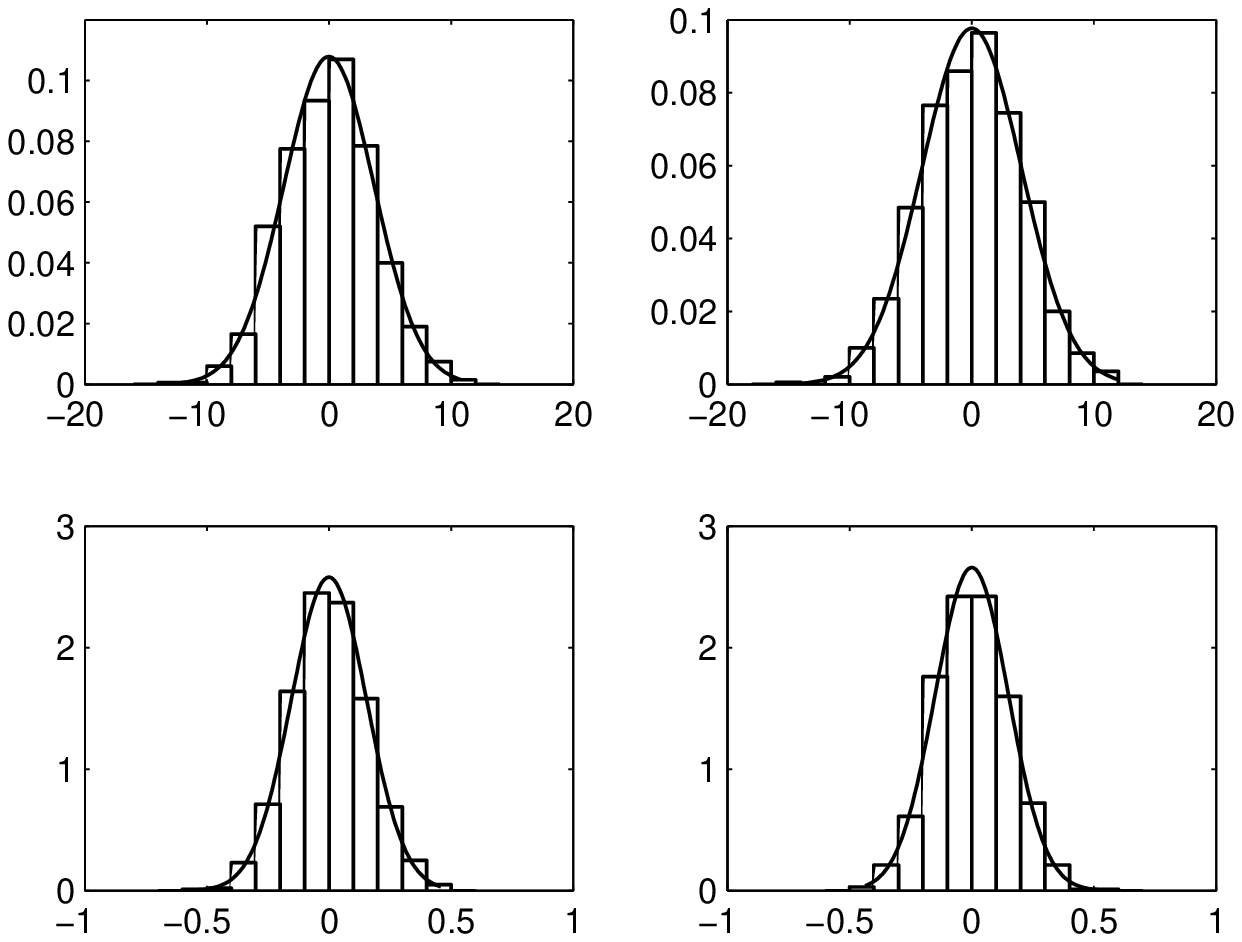}
      \caption{Histograms for BLSE with iid Gaussian non-isotropic sequence}
      \label{fig:BLSE_nonisotropic_iid_histo}
   \end{minipage} \hfill
   \begin{minipage}[c]{.46\linewidth}
      \includegraphics[width=8cm]{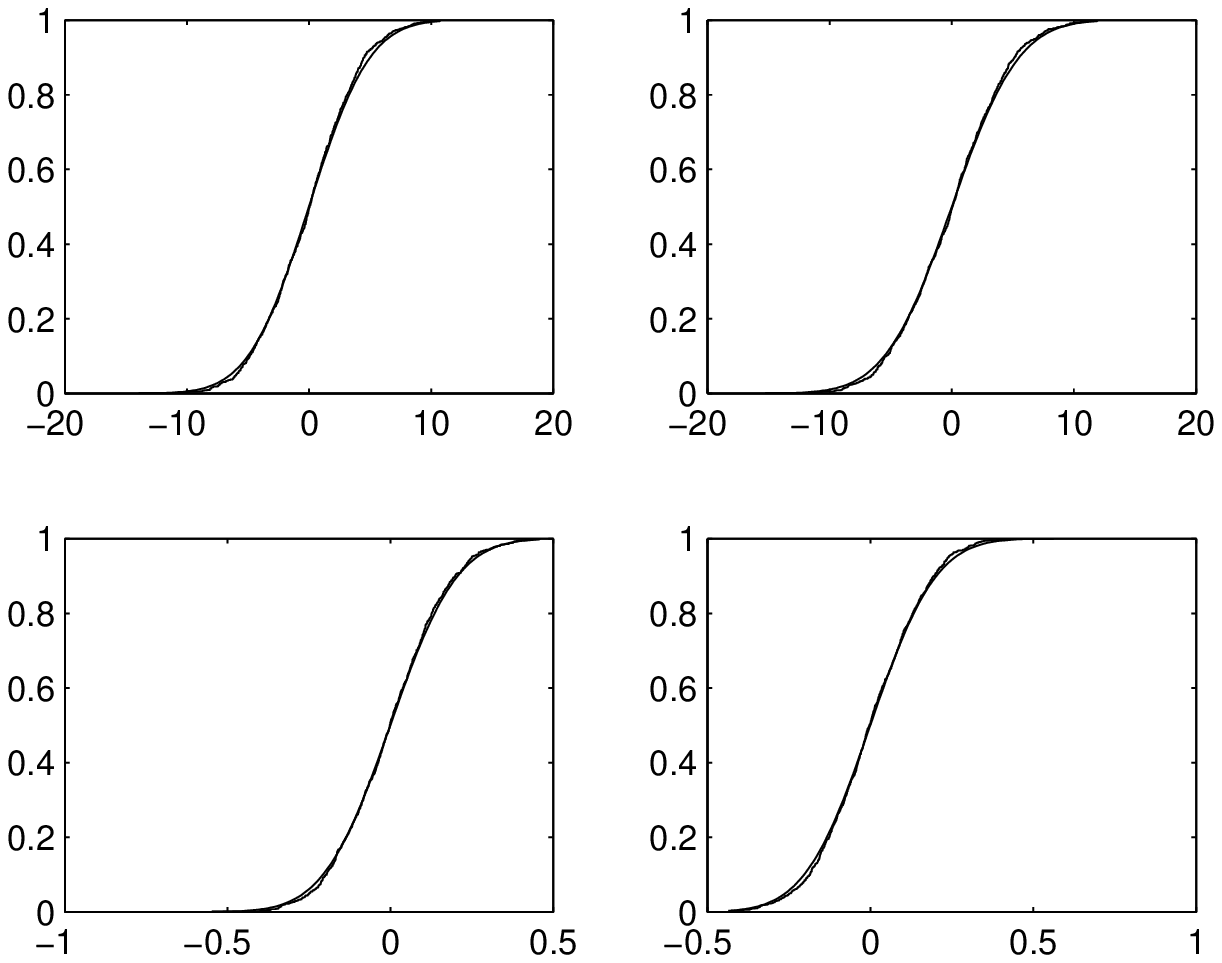}
      \caption{Cumulative distribution functions for BLSE with iid Gaussian non-isotropic sequence}
      \label{fig:BLSE_nonisotropic_iid_cdf}
   \end{minipage}
\end{figure}
\begin{figure}[H]
   \begin{minipage}[]{.46\linewidth}
      \includegraphics[width=8cm]{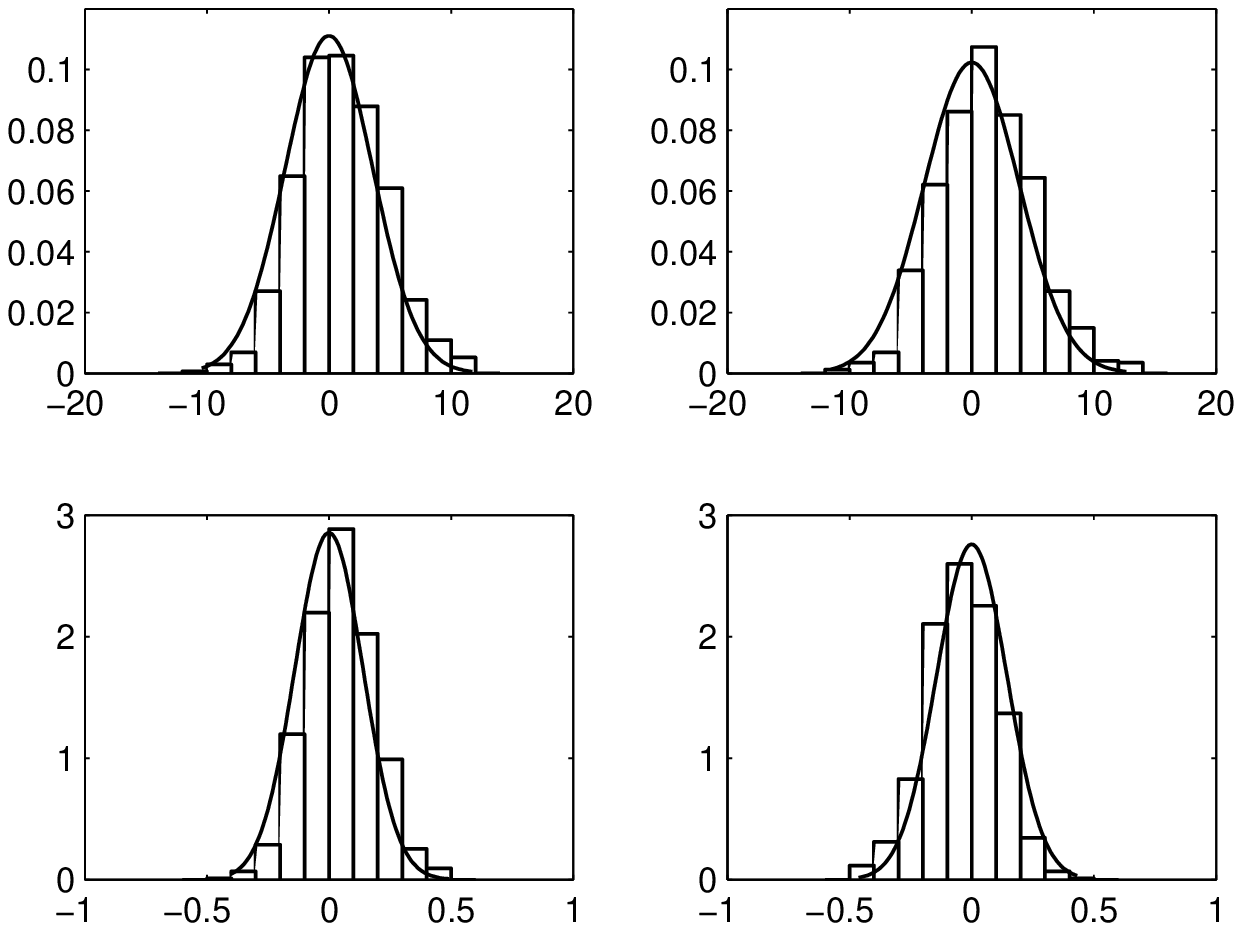}
      \caption{Histograms for MLE with iid Gaussian non-isotropic sequence}
      \label{fig:MLE_nonisotropic_iid_histo}
   \end{minipage} \hfill
   \begin{minipage}[c]{.46\linewidth}
      \includegraphics[width=8cm]{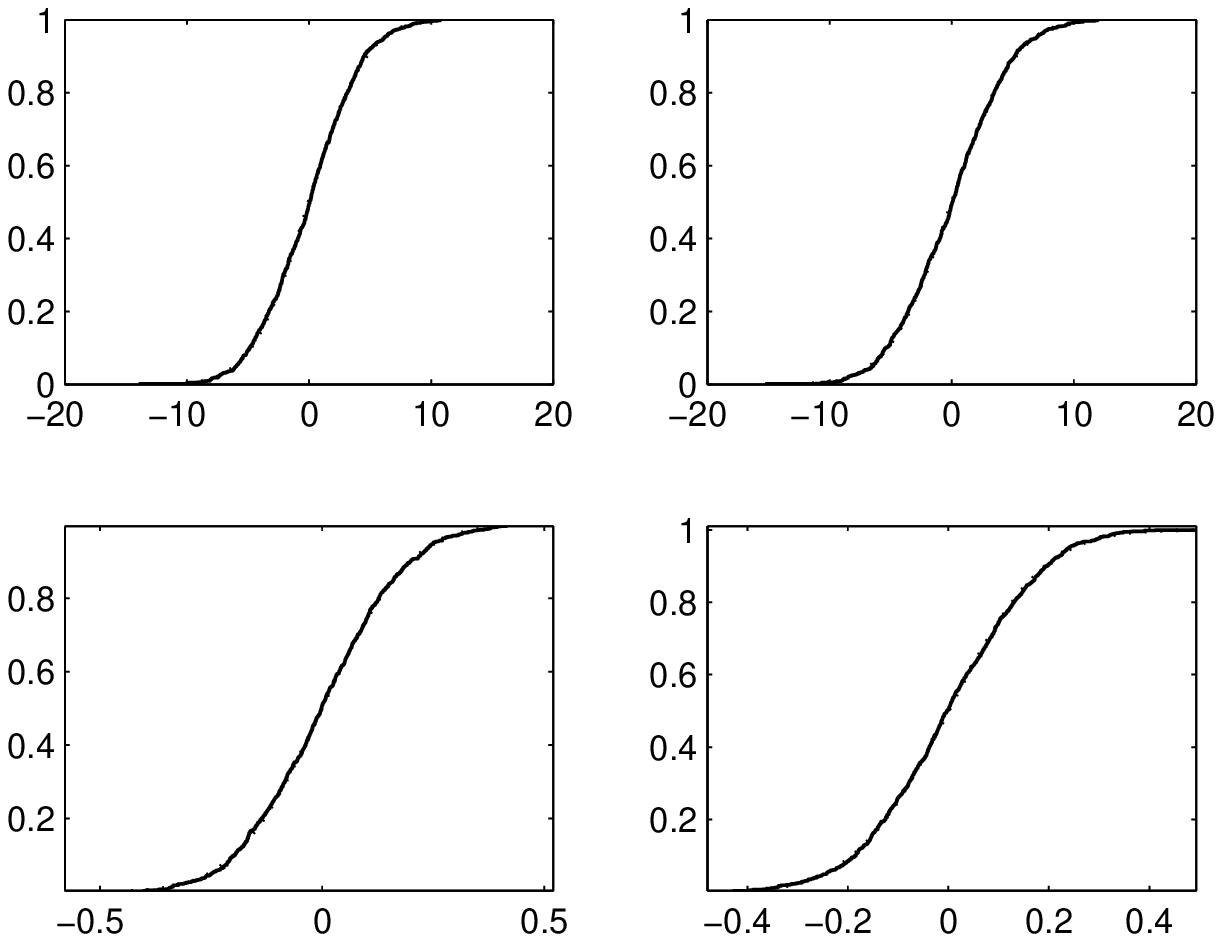}
      \caption{Cumulative distribution functions for MLE with iid Gaussian non-isotropic sequence}
      \label{fig:MLE_nonisotropic_iid_cdf}
   \end{minipage}
\end{figure}
The estimators  values are
\begin{eqnarray*}
      \bar{\theta}_n &=& \begin{pmatrix} 2.8383 & 3.8440 &  0.2264& -0.1516 \end{pmatrix}^T\;, \\
      \tilde{\theta}_n &=& \begin{pmatrix}      2.7984 &  3.7999 & 0.2253 & -0.1499
      \end{pmatrix}^T\;,
\end{eqnarray*}

The values of variance matrices for the two estimators are
\begin{equation*}
    I_M^{-1}(\bar{\theta}_n) = \left(
    \begin{array}{rrrr}
   15.4505 &  17.0122&    0.6174&   -0.6253\\
   17.0122 &  18.7661 &   0.6863 &  -0.6889\\
    0.6174  &  0.6863 &   0.0263 &  -0.0250\\
   -0.6253 &  -0.6889 &  -0.0250 &   0.0253
    \end{array}\right) \;,
\end{equation*}
and
\begin{equation*}
    I^{-1}(\tilde{\theta}_n) = \left(
    \begin{array}{rrrr}
   12.9538  & 14.0399 &   0.4766 &  -0.5214\\
   14.0399  & 15.2720 &   0.5262 &  -0.5661\\
    0.4766   & 0.5262 &   0.0197  & -0.0192\\
   -0.5214  & -0.5661  & -0.0192  &  0.0210
    \end{array}\right) \;,
\end{equation*}
The confidence intervals detailed in table \ref{tab:BLSE_iid_nonisotropic} and table \ref{tab:MLE_iid_nonisotropic} show that the maximum likelihood estimator $\tilde{\theta}_n$ is significantly more accurate than the BLSE. Comparing to the first simulation where the difference is not so large, the higher accuracy of $\tilde{\theta}_n$ can be understood because of the higher level state noise in this simulation. Then, taking into account this state noise for estimating the parameter provides a significantly better result. The conservative intervals for  $R_{\min} = 6 \km$ described in table \ref{tab:CONS_iid_nonisotropic} are quite large compared to those obtained for the first simulation. This inaccuracy results directly from the large value of $\esp \| \e_1\|^2$ chosen for the state noise.

\begin{table}[H]
   \begin{minipage}[]{.46\linewidth}
\begin{tabular}{cc|c}
    \multicolumn{2}{c|}{ $\mathrm{IC}_1(\bar{\theta}_{n,i}$)} & \multicolumn{1}{c}{$|$ $\mathrm{IC}_1(\bar{\theta}_{n,i}$)$|$}\\
  \hline
    7.1040  &  7.6275 &   0.5235\\
    0.7698  &  0.8552 &   0.0854\\
    0.2204  &  0.2323 &   0.0119\\
   -0.1574 &  -0.1457  &  0.0117
\end{tabular}
\caption{Confidence intervals for BLSE at level 95\%}
\label{tab:BLSE_iid_nonisotropic}
   \end{minipage} \hfill
   \begin{minipage}[c]{.46\linewidth}
\begin{tabular}{cc|c}
    \multicolumn{2}{c|}{ $\mathrm{IC}_2(\bar{\theta}_{n,i}$)} & \multicolumn{1}{c}{$|$ $\mathrm{IC}_2(\bar{\theta}_{n,i}$)$|$}\\
  \hline
    1.5049  & 13.2266 &  11.7218\\
   -0.1740  &  1.7990  &  1.9730\\
    0.0842 &   0.3686 &   0.2844\\
   -0.2740  & -0.0291  &  0.2449\\
\end{tabular}
\caption{Conservative confidence intervals for  BLSE at level 95\%}
\label{tab:CONS_iid_nonisotropic}
   \end{minipage}
\center{
\begin{tabular}{cc|c}
    \multicolumn{2}{c|}{ $\mathrm{IC}_3(\tilde{\theta}_{n,i}$)} & \multicolumn{1}{c}{$|$ $\mathrm{IC}_3(\tilde{\theta}_{n,i}$)$|$}\\
  \hline
    7.0721  &  7.5366 &   0.4645\\
    0.7643 &   0.8388  &  0.0746\\
    0.2201  &  0.2305  &  0.0103\\
   -0.1552  & -0.1446  &  0.0107\\
\end{tabular}
\caption{Confidence intervals for MLE at level 95\%}
\label{tab:MLE_iid_nonisotropic} }
\end{table}

For the third and last simulation, the  sequence $(\varepsilon_k )_{k\in\N}$ is an AR(1) series such that
\begin{equation*}
    \forall k \in \N \qquad \varepsilon_{k+1} = \Phi \varepsilon_k + \eta_k\;,
\end{equation*}
where $ \Phi = 0.6$ and $( \eta_k)_{k \in \N}$ is a sequence of i.i.d. random variables with
law ${\cal N}(0,\sigma_\eta^2)$ and $\sigma_\eta=8 \m$. Thus, the sequence of state noise $(\varepsilon_k)_{k \in \N}$ is a dependent stationary sequence such that the mixing coefficient $\alpha_k$  tends exponentially fast to zero as $k$ tends to infinity. Then, we observe the predicted behavior described by Proposition \ref{prop:2}. Indeed, by drawing the densities and cumulative distribution functions of the centered Gaussian law  with the empirical variance, we observe a very good adequacy to the Gaussian behavior, see figures \ref{fig:BLSE_AR1_Gauss_histo} and \ref{fig:BLSE_AR1_Gauss_cdf}.

\begin{figure}[H]
   \begin{minipage}[]{.46\linewidth}
      \includegraphics[width=8cm]{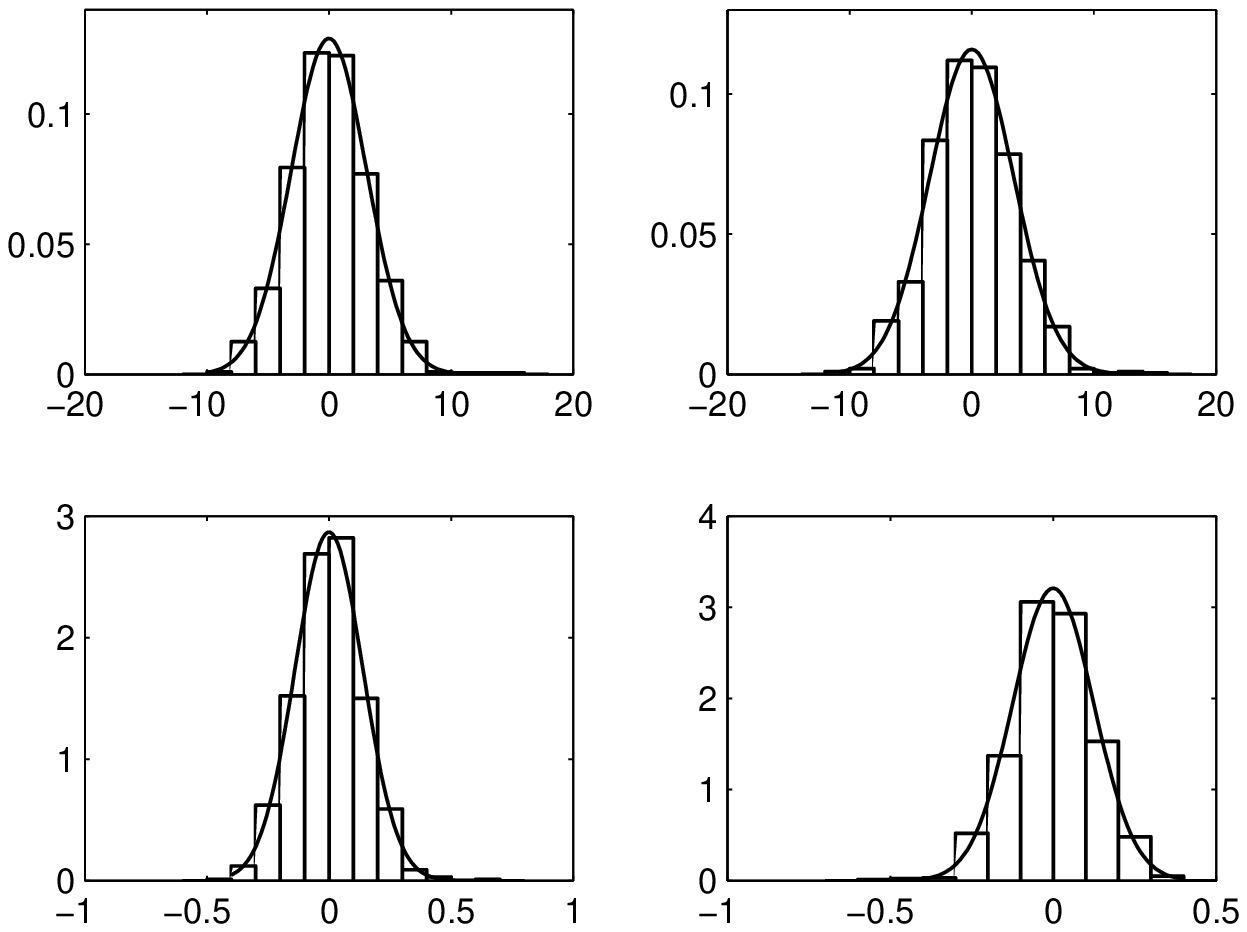}
      \caption{Histograms for AR(1) sequence, Gaussian adequacy}
      \label{fig:BLSE_AR1_Gauss_histo}
   \end{minipage} \hfill
   \begin{minipage}[c]{.46\linewidth}
      \includegraphics[width=8cm]{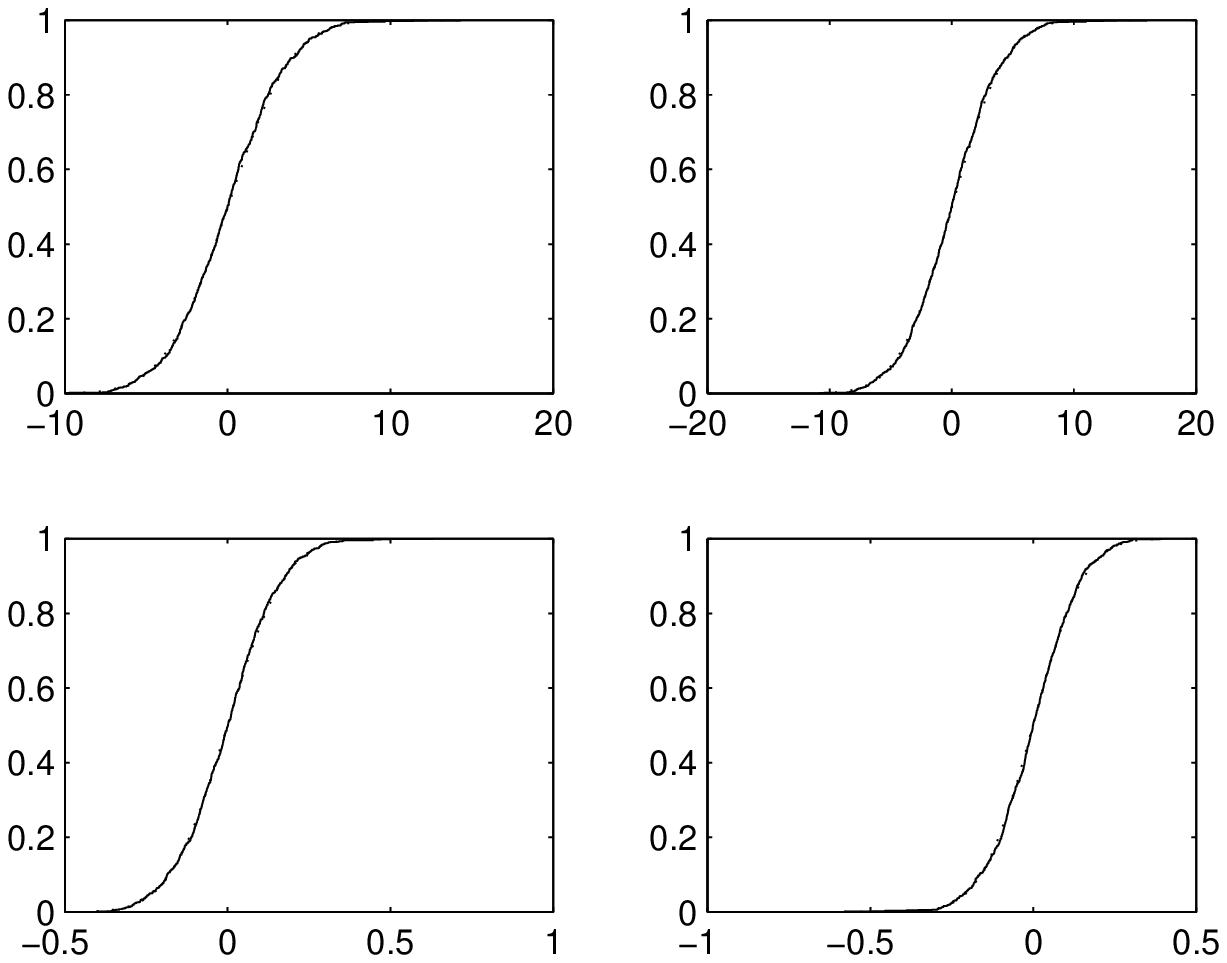}
      \caption{Cumulative distribution functions for AR(1) sequence,  Gaussian adequacy}
      \label{fig:BLSE_AR1_Gauss_cdf}
   \end{minipage}
\end{figure}

$\\$

{\em Acknowledgements:} the authors want to thank Jer\^{o}me Dedecker for helpful discussions about
dependent sequences of random variables.

Elisabeth Gassiat,  Laboratoire de Math\'ematique,   Universit\'e Paris-Sud 11,
B\^atiment 425,  91~405 Orsay C\'edex,  France.
E-mail:  elisabeth.gassiat@math.u-psud.fr\\ \\
Beno\^it Landelle,  Laboratoire de Math\'ematique,   Universit\'e Paris-Sud 11,
B\^atiment 425,  91~405 Orsay C\'edex,  France.
E-mail:  benoit.landelle@math.u-psud.fr

 \end{document}